\crefname{assumption}{Assumption}{Assumptions}
\crefname{figure}{Figure}{Figures}
\theoremstyle{plain}
\newtheorem{theorem}{Theorem}[section]
\newtheorem{corollary}[theorem]{Corollary}
\newtheorem{lemma}[theorem]{Lemma}
\newtheorem{proposition}[theorem]{Proposition}
\numberwithin{equation}{section}
\theoremstyle{definition}
\theoremstyle{remark}
\newtheorem{remark}[theorem]{Remark}
\newtheorem{example}[theorem]{Example}
\setlist[itemize]{leftmargin=.5in}
\setlist[enumerate]{leftmargin=.5in,topsep=3pt,itemsep=3pt,label=(\roman*)}
\newcommand{\email}[1]{\href{#1}{#1}}
\newcommand{\TheTitle}{Enabling stratified sampling in high dimensions \\ via nonlinear dimensionality reduction}
\newcommand{\TheAuthors}{G. Geraci, D. E. Schiavazzi, A. Zanoni}
\title{\TheTitle}
\author{Gianluca Geraci \thanks{Center for Computing Research, Sandia National Laboratories, Albuquerque, NM, USA, \email{ggeraci@sandia.gov}.} \and Daniele E. Schiavazzi \thanks{Department of Applied and Computational Mathematics and Statistics, University of Notre Dame, Notre Dame, IN, USA, \email{dschiavazzi@nd.edu}.} \and Andrea Zanoni \thanks{Centro di Ricerca Matematica Ennio De Giorgi, Scuola Normale Superiore, Pisa, Italy, \email{andrea.zanoni@sns.it}.}}
\date{}
\newcommand{\Q}{\mathcal Q}
\newcommand{\E}{\mathcal E}
\newcommand{\D}{\mathcal D}
\newcommand{\F}{\mathcal F}
\renewcommand{\S}{\mathcal S}
\newcommand{\LF}{\mathrm{LF}}
\newcommand{\HF}{\mathrm{HF}}
\newcommand{\sMC}{\mathrm{sMC}}
\newcommand{\MC}{\mathrm{MC}}
\newcommand{\MFMC}{\mathrm{MFMC}}
\newcommand{\sMFMC}{\mathrm{sMFMC}}
\DeclareMathOperator{\erf}{erf}
\newcommand{\abs}[1]{\left\lvert#1\right\rvert}
\newcommand{\norm}[1]{\left\|#1\right\|}
\renewcommand{\Pr}{\mathbb{P}}
\newcommand{\R}{\mathbb{R}}
\newcommand{\epl}{\varepsilon}
\newcommand{\Var}{\operatorname{Var}}
\newcommand{\Cov}{\operatorname{\mathbb{C}ov}}
\newcommand{\Ex}{\operatorname{\mathbb{E}}}
\DeclareMathOperator*{\argmax}{arg\,max}
\DeclareMathOperator*{\argmin}{arg\,min}
\newcommand{\dd}{\,\mathrm{d}}
\definecolor{shade}{RGB}{100, 100, 100}
\definecolor{bordeaux}{RGB}{128, 0, 50}
\definecolor{leg1}{RGB}{0,114,189}
\definecolor{leg2}{RGB}{217,83,25}
\definecolor{leg3}{RGB}{237,177,32}
\definecolor{leg4}{RGB}{126,47,142}
\definecolor{leg5}{RGB}{119,172,48}
\definecolor{leg21}{RGB}{62,38,169}
\definecolor{leg22}{RGB}{46,135,247}
\definecolor{leg23}{RGB}{55,200,151}
\definecolor{leg24}{RGB}{254,195,56}
\begin{document}
	
\maketitle	

\begin{abstract} 
\noindent We consider the problem of propagating the uncertainty from a possibly large number of random inputs through a computationally expensive model. 
Stratified sampling is a well-known variance reduction strategy, but its application, thus far, has focused on models with a limited number of inputs due to the challenges of creating uniform partitions in high dimensions.
To overcome these challenges, we propose a simple methodology for constructing an effective stratification of the input domain that is adapted to the model response. Our approach leverages neural active manifolds, a recently introduced nonlinear dimensionality reduction technique based on neural networks that identifies a one-dimensional manifold capturing most of the model variability. The resulting one-dimensional latent space is mapped to the unit interval, where stratification is performed with respect to the uniform distribution. The corresponding strata in the original input space are then recovered through the neural active manifold, generating partitions that tend to follow the level sets of the model.
We show that our approach is effective in high dimensions and can be used to further reduce the variance of multifidelity Monte Carlo estimators.
\end{abstract}

\textbf{AMS subject classifications.} 35Q62, 62D05, 65C05, 65K05, 68T07.

\textbf{Key words.} Dimensionality reduction, autoencoders, data-driven modeling, Monte Carlo, multifidelity, stratification, uncertainty propagation.

\section{Introduction}

Mathematical modeling and numerical simulations are fundamental in most scientific and engineering disciplines for advancing our ability to understand and predict complex phenomena~\cite{FoM21}. 
Yet, the predictive power of these simulations is invariably affected by our imperfect knowledge of underlying mechanisms and their inherent variability. Ultimately, understanding and quantifying the uncertainty in computational model outputs is essential for establishing their validity.
This led to increasing recent interest in the development of computational efficient strategies to propagate input uncertainty through complex computational models~\cite{Sul15}. 
Many of these strategies consider uncertain inputs as random variables, and provide approximations for the expectation, or higher order moments, of one or multiple quantities of interests (QoIs). 
Consider the computational model $\Q \colon \R^d \to \R$, and let $X \sim \mu$ be a collection of input parameters with distribution $\mu$ on $\R^d$. 
We seek to estimate the quantity
\begin{equation}\label{eq:QoI}
q = \Ex^\mu[\Q(X)],
\end{equation}
where the expectation is computed with respect to the probability measure $\mu$. 

A common approach to approximate $q$ is through Monte Carlo sampling. 
Given a set $\{ x_n \}_{n=1}^N$ of realizations from the distribution $\mu$, an estimator is defined as
\begin{equation}
\widehat q_\MC = \frac1N \sum_{n=1}^N \Q(x_n),
\end{equation}
which is unbiased, i.e., $\Ex[\widehat q_\MC] = q$, and has variance
\begin{equation}
\Var[\widehat q_\MC] = \frac1N \Var^\mu[\Q(X)].
\end{equation}
Thus, the cost of producing an estimate $\widehat q_\MC$ depends on the cost of solving the computational model $\Q$, with a precision that is directly affected by the number of samples $N$ and the variance of model $\Q$ itself. 
Therefore, obtaining accurate estimates for the statistical moments of QoIs from high-fidelity models can easily become computationally intractable.

To make this computationally feasible, it is essential to reduce the variance of these estimates. Many methods have been proposed in the literature to achieve this.
In this work, we focus on stratified sampling, which is based on a decomposition of the support of $X$ in multiple \emph{strata} of smaller variance~\cite[Chapter 5]{Coc77}. 
This approach is known to scale poorly to high dimensions, as the number of partitions needed to keep a constant number of strata in each dimension grows exponentially.
Variance reduction is also achieved by quasi-Monte Carlo estimators where random samples are replaced by deterministic low-discrepancy sequences~\cite{MoC95}, such as Halton, Hammersley, and Sobol' sequences~\cite{HaH65,Hal60,Sob67}. 
Even if quasi-Monte Carlo (qMC) can theoretically achieve asymptotically faster convergence than standard Monte Carlo under appropriate regularity assumptions on the model, its performance may also suffer in high dimensions due to challenges of generating uncorrelated samples. Furthermore,  qMC requires the use of a power-of-two number of samples to maintain optimal low-discrepancy properties.  
Dimensionality reduction techniques have been applied to quasi-Monte Carlo methods to mitigate this problem, combined with smoothness in~\cite{MoC96}, and more recently with quadratic regression in~\cite{ImT25}.
We also mention importance sampling, where samples are drawn from a different distribution than the one of interest~\cite{KlV78}, and antithetic sampling where one aims to get samples that are negatively correlated~\cite{HaM56}. 
Finally, an increasingly popular family of approaches include control variates~\cite{Lem17}, multilevel Monte Carlo~\cite{Gil15}, and multifidelity Monte Carlo, that leverage cheaper low-fidelity approximations of the expensive high-fidelity model~\cite{NgW14}.
For a comprehensive review on variance reduction techniques for Monte Carlo the interested reader is referred to \cite[Chapter 4]{Gla04}.

We propose a methodology based on nonlinear dimensionality reduction to generate partitions that are \emph{adapted} to the properties of the model $\Q$.
In practice, we employ neural active manifolds (NeurAM)~\cite{ZGS25} to determine a one-dimensional manifold that follows the variability of the model, resulting in strata that tend to be separated by the level sets of $\Q$.
NeurAM combines an autoencoder and a low-dimensional surrogate model built on a one-dimensional latent space.
In addition, through projections on the inverse cumulative distribution function, it provides an invertible transformation between the latent space and a uniform distribution supported on the unit interval. 
Thus NeurAM allows a straightforward partition on the unit interval to be mapped to corresponding strata in the original domain.
The fact that the stratification is performed in the one-dimensional unit interval is the crucial point allowing the method to scale to high-dimensional input domains.
Since NeurAM and, more generally, dimensionality reduction, have already been successfully applied to improve the performance of multifidelity Monte Carlo estimators in~\cite{ZGE23,ZGS24a,ZGS24b,MZK24}, we show how this novel stratification can also be implemented in the context of multifidelity estimators. 

The main contributions of this work are summarized below.
\begin{itemize}[leftmargin=*]
\item We introduce a scalable methodology to generate stratified sampling estimators for high-dimensional problems.
\item We show that the proposed approach shares the properties of traditional stratified sampling estimators. In particular, the estimator remains unbiased and there exist optimal allocations that guarantee variance reduction with respect to standard Monte Carlo.
\item As an alternative to uniform stratification, we provide a heuristic algorithm that, at the price of slightly increasing the computational cost, further reduces the variance of the resulting estimator.
\item We provide extensive numerical evidence that our approach is both superior to traditional stratified sampling in low dimensions and scalable to high-dimensional problems.
\item We combine NeurAM-based stratified sampling with multifidelity Monte Carlo estimators, provide conditions leading to variance reduction, and show numerically the advantages resulting from combining the two approaches.
\end{itemize}

\paragraph{Outline} This paper proceeds as follows. In \cref{sec:method}, we introduce our methodology, analyze the variance of the proposed estimator, and present a heuristic algorithm for stratification. Next, in \cref{sec:multifidelity} we apply this approach to multifidelity estimators. Then, in \cref{sec:experiments} we present numerical examples to demonstrate the properties and potential of our technique. Finally, \cref{sec:conclusion} concludes the paper and suggests avenues for future research.

\section{Stratified sampling} \label{sec:method}

Let $\mathbb D \subseteq \R^d$ be the support of the distribution $\mu$, and consider a partition $\{ D_s \}_{s=1}^S$ of $\mathbb D$ into $S$ non-overlapping strata such that
\begin{equation} \label{eq:condition_stratification}
\mathbb D = \bigcup_{s=1}^S D_s \qquad \text{and} \qquad \mu(D_i \cap D_j) = 0 \text{ if } i \neq j.
\end{equation}
Moreover, let $\{ N_s \}_{s=1}^S$ be the number of samples in each stratum such that 
\begin{equation}
N = \sum_{s=1}^S N_s,
\end{equation}
where $N$ is the available computational budget. Then, the stratified Monte Carlo estimator is defined as
\begin{equation} \label{eq:sMC_general}
\widehat q_\sMC = \sum_{s=1}^S \mu(D_s) \frac1{N_s} \sum_{n=1}^{N_s} \Q(x_n^{(s)}),
\end{equation}
where $\{ \{ x_n^{(s)} \}_{n=1}^{N_s} \}_{s=1}^S$ is the collection of samples such that $x_n^{(s)} \sim \mu |_{D_s}$, which denotes the distribution $\mu$ conditioned on the stratum $D_s$. 
We remark that the rationale behind $\widehat q_\sMC$ is the law of total expectation, leading to an estimator that satisfies $\Ex[\widehat q_\sMC] = q$. 
Moreover, its variance is given by
\begin{equation}
\Var[\widehat q_\sMC] =  \sum_{s=1}^S  \frac{\mu(D_s)^2}{N_s} \Var^\mu[\Q(X) | X \in D_s].
\end{equation}
Under an appropriate allocation $\{ N_s \}_{s=1}^S$ of the $N$ samples, which we will discuss in the next section, the variance of the stratified estimator is never larger than the variance of the corresponding standard Monte Carlo estimator with the same computational budget. 
However, the main challenge in stratified sampling is the selection of the strata $\{D_s\}_{s=1}^S$, which represents a serious obstacle for its application to high-dimensional models, since stratified sampling faces the curse of dimensionality~\cite[Lemma 3]{PeK22}.

In high-dimensional settings, one can adopt a more effective approach known as Latin Hypercube Sampling (LHS), which involves drawing samples that are stratified across each dimension \cite{MBC79}. Intuitively, LHS is the high-dimensional analogue of placing one sample in each row and each column of a regular two-dimensional grid. Although LHS improves standard stratified sampling, its effectiveness also decreases for high-dimensional models. Moreover, this technique assumes that input variables are independent, making it unsuitable for problems with correlated inputs.
A different strategy for constructing more clever strata has been proposed in \cite{EFJ11}, where the strata, defined as hyperrectangles, and their sample allocation are adaptively updated during the estimation process based on the directions that define these hyperrectangles. This approach enhances the accuracy of the estimators, particularly in the asymptotic regime where the number of samples and strata is large.
Alternative strategies are based on iterative refinements of an existing stratification. In \cite{STH15}, new samples are added sequentially by dividing existing strata along the direction identified by the unstratified component of largest magnitude.
This approach is further combined with hierarchical LHS in \cite{Shi16} to improve its performance for high-dimensional problems. 
In \cite{PeK22}, the authors propose to combine optimal and proportional sampling, in a way that is applicable to nonsmooth models, and provide theoretical estimates for the expected performance.
LHS can also be applied within each stratum to further reduce the variance of the resulting stratified estimator \cite{KrP24}. 
In all these adaptive strategies, multiple iterations are required to achieve an effective stratification, and new samples need to be generated at each iteration.

Additionally, even after selecting the strata, it is not always straightforward to compute their probability, which is a necessary step for computing $\widehat q_\sMC$.
To overcome these problems, we use nonlinear dimensionality reduction, and specifically, the NeurAM algorithm introduced in~\cite{ZGS25}, to inform the stratification of the domain.

\subsection{NeurAM-based stratification} \label{sec:NeurAM_stratification}

NeurAM aims to determine a one-dimensional manifold $\gamma$ capturing the variability of the model $\Q$ by employing an autoencoder $(\E, \D)$ that combines an encoder $\E \colon \mathbb D \to \R$ and a decoder $\D \colon \R \to \mathbb D$.
Moreover, let $\S \colon \R \to \R$ be a one-dimensional surrogate defined over the latent space of the autoencoder, and the quantities $\E,\D,\S$ are determined by minimizing a loss function of the form
\begin{equation}\label{eq:loss_function}
\begin{aligned}
\mathcal L(\E, \D, \S) &= \Ex^\mu \left[ (\Q(X) - \S(\E(\D(\E(X)))))^2 \right] + \Ex^\mu \left[ (\Q(X) - \S(\E(X)))^2 \right] \\
&\quad+ \Ex^\mu \left[ (\D(\E(X)) - \D(\E(\D(\E(X)))))^2 \right].
\end{aligned}
\end{equation} 
The three terms in the loss function play different roles. 
Specifically, the second term guarantees that the surrogate model $\S$, defined on the latent space of the autoencoder, accurately approximates the original model $\Q$, i.e., $\Q \simeq \S \circ \E$. 
The first term, which is instead essential to identify the NeurAM, enforces that projecting a point onto such one-dimensional manifold should preserve the corresponding model output, i.e., $\Q(X) \simeq Q(\widetilde X)$, where $\widetilde X = \D(\E(X))$. 
Since evaluating $\Q$ directly can be expensive, the surrogate model is used to approximate $\Q(X)$ at the projection $\widetilde X$ along the NeurAM. 
Finally, the third term ensures that if a point already lies on the manifold, then it is projected onto itself, i.e., $\D(\E(\widetilde X)) \simeq \widetilde X$, or equivalently, that the NeurAM is made up of fixed points of the composition of the encoder and decoder. Note that a global optimum leading to a zero loss can be expressed in closed form as
\begin{equation}\label{eq:ideal_solution}
\E = \Q, \qquad \Q \circ \D = \mathcal I, \qquad \S = \mathcal I,
\end{equation}
where $\mathcal I$ stands for the identity function. 
This solution is however not easy to compute, due to the possibly large computational cost of evaluating the model $\Q=\mathcal E$, and the complexity of computing $\mathcal D$ as the right inverse of $\Q$. 
Therefore, in practice, we parameterize $\widetilde\E(\cdot, \mathfrak e), \widetilde\D(\cdot, \mathfrak d), \widetilde\S(\cdot, \mathfrak s)$ as neural networks with weights $\mathfrak e, \mathfrak d, \mathfrak s$, respectively, and solve a  minimization problem where the loss function is approximated by
\begin{equation}\label{eq:loss_NeurAM_approx}
\begin{aligned}
\widetilde{\mathcal L}(\mathfrak e, \mathfrak d, \mathfrak s) &= \frac1M \sum_{m=1}^M (\Q(x_m) - \widetilde\S(\widetilde\E(\widetilde\D(\widetilde\E(x_m; \mathfrak e); \mathfrak d); \mathfrak e); \mathfrak s))^2 + \frac1M \sum_{m=1}^M (\Q(x_m) - \widetilde\S(\widetilde\E(x_m; \mathfrak e); \mathfrak s))^2 \\
&\quad + \frac1M \sum_{m=1}^M (\widetilde\D(\widetilde\E(x_m; \mathfrak e); \mathfrak d) - \widetilde\D(\widetilde\E(\widetilde\D(\widetilde\E(x_m; \mathfrak e); \mathfrak d); \mathfrak e); \mathfrak d))^2,
\end{aligned}
\end{equation}
where $\{ x_m \}_{m=1}^M$ is a set of realizations from the distribution $\mu$. 
We remark that NeurAM also automatically provides a surrogate model for $\Q$ given by $\Q_{\mathrm S} = \S \circ \E$. 
Additionally, realizations from the latent space are mapped to the unit interval $[0,1]$ as follows.
Let $\F$ be the cumulative distribution function (CDF) of the latent variable $\E(X)$, i.e.,
\begin{equation} \label{eq:CDF}
\F(t) = \Pr^\mu(\E(X) \le t).
\end{equation}
By the inverse transform sampling, it follows that 
\begin{equation} \label{eq:inverse_transform_sampling}
(\F \circ \E)_\# \mu = \mathcal U([0,1]),
\end{equation}
which means that for $X \sim \mu$ we have $U = \F(\E(X)) \sim \mathcal U([0,1])$. Then, the one-dimensional NeurAM manifold $\gamma$ is defined for $u \in [0,1]$ as
\begin{equation}
\gamma \colon u \mapsto \mathcal D(\mathcal F^{-1}(u)).
\end{equation}
Similarly to the autoencoder and the surrogate, also the CDF $\F$ must be approximated. 
We propose to use the empirical distribution $\widetilde\F$ as represented by the histogram of $\{ \widetilde\E(x_k; \mathfrak e) \}_{k=1}^K$ where $x_k \sim \mu$. 
Notice that $K$ is not limited by the available computational budget, since the cost of evaluating  the encoder is negligible. 
For additional details on the construction of the neural active manifold we refer to~\cite{ZGS25}.

We now show how NeurAM can be used to create a stratification of $\mathbb D$.
Let $\{ a_s \}_{s=0}^{S}$ be a collection of locations in $[0,1]$ such that
\begin{equation}
0 = a_0 < a_1 < \cdots < a_{S-1} < a_S = 1,
\end{equation}
and let $\{ A_s \}_{s=1}^S$ be the partition of $[0,1]$ where $A_s = [a_{s-1}, a_s]$. Then, for all $s = 1, \dots, S$, define
\begin{equation} \label{eq:Ds_def}
D_s = \{ x \in \mathbb D \colon \F(\E(x)) \in A_s \}.
\end{equation}
The main idea underlying this stratification is the reasonable assumption that inputs that are projected to points that are close in the one-dimensional manifold should have similar outputs, resulting in strata with limited variance. 
The next result gives an explicit formula to compute the probability of the stratum $D_s$, and implies that the conditions in equation \eqref{eq:condition_stratification} are satisfied for this particular stratification.

\begin{lemma} \label{lem:mu_lambda}
Let $\{ D_s \}_{s=1}^S$ be defined as in equation \eqref{eq:Ds_def}. Then
\begin{equation}
\mu(D_s) = \lambda(A_s) = a_s - a_{s-1},
\end{equation}
where $\lambda$ denotes the one-dimensional Lebesgue measure.
\end{lemma}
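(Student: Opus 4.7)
The proof is essentially a direct unfolding of definitions combined with the pushforward identity \eqref{eq:inverse_transform_sampling}. My plan is the following.

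First, I would rewrite $\mu(D_s)$ using the definition \eqref{eq:Ds_def} as
\begin{equation*}
\mu(D_s) = \mu\bigl(\{ x \in \mathbb D : \F(\E(x)) \in A_s \}\bigr) = \Pr^\mu\bigl(\F(\E(X)) \in A_s\bigr),
\end{equation*}
which expresses $\mu(D_s)$ as the probability that the composite map $\F \circ \E$ applied to $X \sim \mu$ lands in $A_s$. This is just the definition of the pushforward measure evaluated on $A_s$:
\begin{equation*}
\mu(D_s) = (\F \circ \E)_\# \mu \, (A_s).
\end{equation*}

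Next I would invoke \eqref{eq:inverse_transform_sampling}, which states exactly that $(\F \circ \E)_\# \mu = \mathcal U([0,1])$. Substituting gives
\begin{equation*}
\mu(D_s) = \mathcal U([0,1])(A_s) = \lambda(A_s) = a_s - a_{s-1},
\end{equation*}
since $A_s = [a_{s-1}, a_s]$ and the uniform measure on $[0,1]$ coincides with Lebesgue measure on Borel subsets of the unit interval.

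The only real subtlety lies in justifying \eqref{eq:inverse_transform_sampling}, which is standard inverse-transform sampling but requires the continuity of the CDF $\F$ of the latent variable $\E(X)$ to guarantee that $\F(\E(X))$ is exactly uniform on $[0,1]$ rather than merely stochastically dominated by the uniform distribution. Since that identity is already stated earlier in the excerpt, it can be used as a black box here, and there is no further obstacle: the proof is essentially a one-line application of the pushforward identity. I would also briefly note as a consequence that the strata satisfy the disjointness/covering conditions \eqref{eq:condition_stratification}, since the preimages of the disjoint sets $A_s$ under the measurable map $\F \circ \E$ form a measurable partition of $\mathbb D$ up to $\mu$-null overlap on the endpoints $\{a_s\}$.
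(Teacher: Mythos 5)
Your proposal is correct and follows essentially the same route as the paper: rewrite $\mu(D_s)$ as $\Pr^\mu(\F(\E(X)) \in A_s)$ via the definition \eqref{eq:Ds_def}, then apply the pushforward identity \eqref{eq:inverse_transform_sampling} to identify this with the uniform (Lebesgue) measure of $A_s = [a_{s-1}, a_s]$. Your additional remarks on the continuity of $\F$ and on the partition conditions \eqref{eq:condition_stratification} are consistent with, but not needed beyond, the paper's one-line argument.
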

\begin{proof}
By equation \eqref{eq:inverse_transform_sampling} we have
\begin{equation}
\mu(D_s) = \Pr^\mu \left( X \in D_s \right) = \Pr^{\mu} \left( \mathcal F(\mathcal E(X)) \in A_s \right) = \Pr^{\mathcal U([0,1])} \left( U \in A_s \right) = \lambda(A_s) = a_s - a_{s-1},
\end{equation}
which is the desired result.
\end{proof}

Therefore, due to \cref{lem:mu_lambda}, the proposed NeurAM-based stratified estimator and its variance become
\begin{equation} \label{eq:sMC_general_NeurAM}
\begin{aligned}
\widehat q_\sMC &= \sum_{s=1}^S \lambda(A_s) \frac1{N_s} \sum_{n=1}^{N_s} \Q(x_n^{(s)}), \\
\Var[\widehat q_\sMC] &=  \sum_{s=1}^S  \frac{\lambda(A_s)^2}{N_s} \Var^\mu[\Q(X) | X \in D_s],
\end{aligned}
\end{equation}
where the second expression enables direct computation of the estimator variance by approximating the variances within each stratum. The main steps to construct the stratified estimator are summarized in \cref{alg:construction}.

\begin{remark} \label{rem:strata_mu}
The samples $x_n^{(s)}$ in the stratum $D_s$ can be obtained by sampling from the probability distribution $\mu$ and by rejecting the values for which $\F(\E(x_n^{(s)})) \not\in A_s$. By construction, this guarantees that the accepted sample $x_n^{(s)}$ always lies within the domain $\mathbb D$, since all candidates are sampled from $\mu$. Moreover, since acceptance is conditioned on $\F(\E(x_n^{(s)})) \in A_s$, the resulting sample necessarily belongs to $D_s$ by definition. Therefore, it is not possible to produce samples outside the support of $\mathbb D$, or failing to cover portions of $\mathbb D$. In practice, this also means that the method can be used with existing datasets without requiring a specialized sampling procedure.
\end{remark}

\begin{example}[Uniform stratification] \label{ex:uniform}
A simple approach to build a stratification of the unit interval $[0,1]$ is to consider equispaced points $\{ a_s \}_{s=0}^S$, i.e., $a_s = s/S$, which gives $\mu(D_s) = \lambda(A_s) = 1/S$. Using uniform stratification, the estimator and its variance become
\begin{equation}
\begin{aligned}
\widehat q_{\mathrm{sMC}}^{\mathrm u} &= \frac1S \sum_{s=1}^S \frac1{N_s} \sum_{n=1}^{N_s} \Q(x_n^{(s)}), \\
\Var[\widehat q_\sMC^{\mathrm u}] &= \frac1{S^2} \sum_{s=1}^S \frac1{N_s} \Var^\mu[\Q(X) | X \in D_s].
\end{aligned}
\end{equation}
\end{example}

\begin{algorithm}
\caption{NeurAM-based stratified Monte Carlo estimator} \label{alg:construction}
\begin{tabbing}
\textbf{Input:} \= Model $\Q$  \\
\> Input distribution $\mu$ with support $\mathbb D$ or samples from it \\
\> Number of strata $S$ and stratification of the unit interval $\{ a_s \}_{s=0}^S$ \\
\> Computational budget $N$ and allocation strategy $\{ N_s \}_{s=1}^S$
\end{tabbing}
\begin{tabbing}
\textbf{Output:} \= Estimator $\widehat q_\sMC$ of $q = \Ex^\mu[\Q(X)]$
\end{tabbing}
\begin{enumerate}[label=\arabic*:,itemindent=-0.75cm]
\item Construct the NeurAM $(\E, \D, \S)$ by minimizing the loss function $\mathcal L$ in equation \eqref{eq:loss_function}
\item Construct the CDF $\F$ from equation \eqref{eq:CDF}
\item Define the subdomains $\{ A_s \}_{s=1}^S$ as $A_s = [a_{s-1}, a_s]$
\item Compute the coefficients $\{ \lambda(A_s) \}_{s=1}^S$ as $\lambda(A_s) = a_s - a_{s-1}$
\item Define the subdomains $\{ D_s \}_{s=1}^S$ using equation \eqref{eq:Ds_def}
\item Draw the samples $\{ \{ x_n^{(s)} \}_{n=1}^{N_s} \}_{s=1}^S$ such that $x_n^{(s)} \sim \mu |_{D_s}$ as explained in \cref{rem:strata_mu}
\item Compute the estimator $\widehat q_\sMC$ using equation \eqref{eq:sMC_general_NeurAM}
\end{enumerate}
\end{algorithm}

\subsection{Allocation strategies and analysis of the variance} \label{sec:variance_sMC}

A natural question is how to allocate the available budget $N$, i.e., how to choose the samples $\{ N_s \}_{s=1}^S$, in order to get the smallest possible variance in equation \eqref{eq:sMC_general_NeurAM}. The good news is that the same properties of the standard stratified Monte Carlo estimator still hold true. In particular, the optimal allocation that minimizes the variance is given by
\begin{equation}
N_s^{(1)} = \frac{\lambda(A_s)\sqrt{\Var^\mu[\Q(X) | X \in D_s]}}{\sum_{r=1}^S \lambda(A_r)\sqrt{\Var^\mu[\Q(X) | X \in D_r]}}\,N.
\end{equation}
We notice that, to compute $\{ N_s \}_{s=1}^S$, it is necessary to estimate $\Var^\mu[\Q(X) | X \in D_s]$ for all $s = 1, \dots, S$, and this can be done employing the surrogate model $\Q_{\mathrm S}$ provided by NeurAM. 
Nevertheless, a simpler proportional allocation
\begin{equation}
N_s^{(2)} = \lambda(A_s)\,N,
\end{equation}
still guarantees variance reduction with respect to standard Monte Carlo. 
In fact, by replacing these two allocations in the variance~\eqref{eq:sMC_general_NeurAM}, we have
\begin{equation} \label{eq:optimal_variances}
\begin{aligned}
\Var[\widehat q_\sMC^{(1)}] &= \frac1N \left\{ \sum_{s=1}^S \lambda(A_s) \sqrt{\Var^\mu[\Q(X) | X \in D_s]} \right\}^2, \\
\Var[\widehat q_\sMC^{(2)}] &= \frac1N \sum_{s=1}^S \lambda(A_s) \Var^\mu[\Q(X) | X \in D_s],
\end{aligned}
\end{equation}
which, due to Jensen's inequality and the law of total variance, yield
\begin{equation} \label{eq:bound_variances}
\Var[\widehat q_\sMC^{(1)}] \le \Var[\widehat q_\sMC^{(2)}] \le \frac1N \Var^\mu[\Q(X)] = \Var[\widehat q_\MC].
\end{equation}
In particular, we have
\begin{equation}
\Var[\widehat q_\MC] - \Var[\widehat q_\sMC^{(2)}] = \frac1N \sum_{s=1}^S \lambda(A_s) \left( \Ex^\mu[\Q(X) | X \in D_s] - \Ex^\mu[Q(X)] \right)^2,
\end{equation}
which shows that the variance reduction is greater when the differences between the global expectation and the local expectations within each stratum are larger. We also mention that hybrid allocation strategies that use both proportional and optimal allocation have been introduced in \cite{PeK22}.

\begin{figure}
\begin{center}
\includegraphics{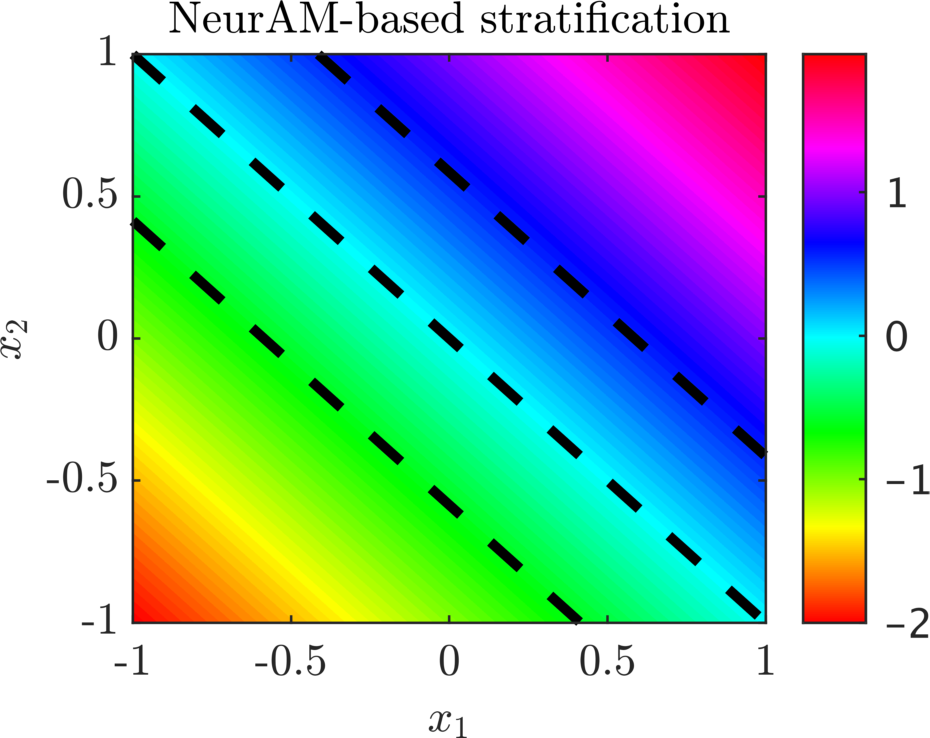} \hspace{1cm}
\includegraphics{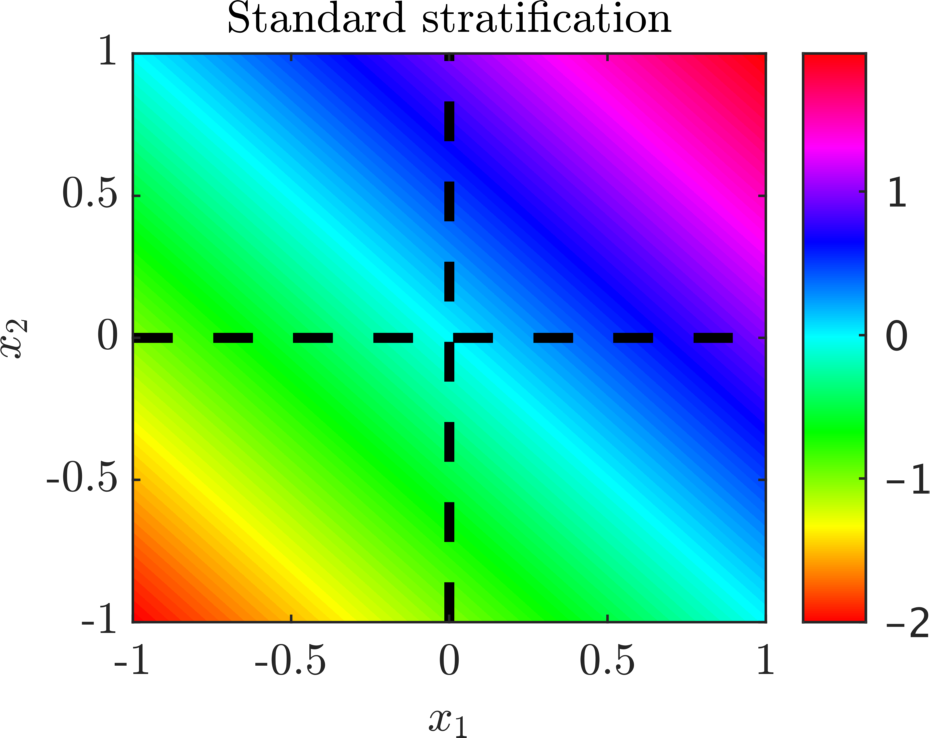}
\end{center}
\caption{Comparison between the NeurAM-based stratification (left) and the standard stratification made with a regular grid (right), for the simple linear model in \cref{ex:example}.}
\label{fig:example}
\end{figure}

\begin{example} \label{ex:example}
To demonstrate the proposed approach and compare it with traditional stratified samping, we consider a linear model for which the NeurAM can be computed analytically. 
Let $\Q(x) = x_1 + x_2$ and $\mu = \mathcal U([-1,1]^2)$. Then, a global minimizer of~\eqref{eq:loss_function} is given by
\begin{equation}
\E(x) = x_1 + x_2, \qquad \D(z) = \begin{bmatrix} \frac{z}2 & \frac{z}2 \end{bmatrix}^\top, \qquad \S(z) = z,
\end{equation}
with NeurAM given by $\{ x \in [-1,1]^2 \colon x_2 = x_1 \}$. Moreover, the latent variables follow a triangular distribution, $\E(X) \sim \mathcal T(-2,0,2)$, with CDF~\eqref{eq:CDF} given by
\begin{equation}
\F(t) = \begin{cases}
0, & \text{ if } t \le -2, \\
\frac18 t^2 + \frac12 t + \frac12, & \text{ if } -2 \le t \le 0, \\
- \frac18 t^2 + \frac12 t + \frac12, & \text{ if } 0 \le t \le 2, \\
1, & \text{ if } t \ge 2.
\end{cases}
\end{equation}
We then choose a number $S = 4$ of strata, and consider a uniform partition of the unit interval
\begin{equation}
A_1 = \left[ 0, \frac14 \right], \qquad A_2 = \left[ \frac14, \frac12 \right], \qquad A_3 = \left[ \frac12, \frac34 \right], \qquad A_4 = \left[ \frac34, 1 \right].
\end{equation}
From equation~\eqref{eq:Ds_def}, the NeurAM-based stratification of $[-1,1]^2$ is given by
\begin{equation}
\begin{aligned}
D_1 &= \left\{ x \in [-1,1]^2 \colon -1 < x_1 < \sqrt2-1, \quad -1 < x_2 < -x_1 + \sqrt2-2 \right\}, \\
D_2 &= \left\{ x \in [-1,1]^2 \colon -1 < x_1 < 1, \quad -x_1 + \sqrt2 - 2 < x_2 < -x_1 \right\}, \\
D_3 &= \left\{ x \in [-1,1]^2 \colon -1 < x_1 < 1, \quad -x_1 < x_2 < -x_1 + 2 - \sqrt2 \right\}, \\
D_4 &= \left\{ x \in [-1,1]^2 \colon 1-\sqrt2 < x_1 < 1, \quad -x_1 + 2 - \sqrt2 < x_2 < 1 \right\},
\end{aligned}
\end{equation}
which is shown in the left plot in \cref{fig:example}. 
We then have $\lambda(A_s) = 1/4$ for all $s = 1, \dots, 4$, and the local variances (we omit their derivation for brevity) are
\begin{equation}
\begin{aligned}
\Var^\mu[\Q(X) | X \in D_1] = \Var^\mu[\Q(X) | X \in D_4] &= \frac19, \\
\Var^\mu[\Q(X) | X \in D_2] = \Var^\mu[\Q(X) | X \in D_3] &= \frac{32}9 \sqrt2 - 5.
\end{aligned}
\end{equation}
Using~\eqref{eq:optimal_variances}, we have
\begin{equation}
\Var[\widehat q_\sMC^{(1)}] = \frac{16\sqrt2 - 22 + \sqrt{32\sqrt2 - 45}}{18N} \qquad \text{and} \qquad \Var[\widehat q_\sMC^{(2)}] = \frac{16\sqrt2 - 22}{9N},
\end{equation}
where $N$ is the computational budget. Let us now compare these results with the variance of a stratified Monte Carlo estimator, where the stratification is given by a regular grid with two subdivisions in each dimension and therefore $S=4$ strata in total
\begin{equation}
\widetilde D_1 = [-1, 0] \times [0, 1], \quad \widetilde D_2 = [0, 1] \times [0, 1], \quad \widetilde D_3 = [-1, 0] \times [-1, 0], \quad \widetilde D_4 = [0, 1] \times [-1, 0],
\end{equation}
which is shown in the right plot in \cref{fig:example}. 
In this case, for all $s = 1, \dots, 4$ we have
\begin{equation}
\Var^\mu[\Q(X) | X \in \widetilde D_i] = \frac16, 
\end{equation}
which implies
\begin{equation}
\Var[\widehat q_\sMC^{(1)}] = \frac1{6N} \qquad \text{and} \qquad \Var[\widehat q_\sMC^{(2)}] = \frac1{6N}.
\end{equation}
Moreover, notice that the variance of standard Monte Carlo is $\Var[\widehat q_\MC] = 2/(3N)$. 
Therefore, independently of the allocation strategy, the variance of the NeurAM-based stratified estimator ($\sim0.06/N$ and $\sim0.07/N$) is significantly smaller than the one given by a regular grid ($\sim0.17/N$), which already improves over the standard Monte Carlo estimator variance ($\sim0.67/N$).
\end{example}

In the following result, we quantify how the accuracy of NeurAM, i.e., the magnitude of the achieved minimum of the loss function $\mathcal L$ in equation \eqref{eq:loss_function}, influences the variance of the stratified Monte Carlo estimator.

\begin{theorem} \label{thm:bound_var}
Let $\E, \D, \S$ satisfy $\mathcal L(\E, \D, \S) = \epl \ge 0$, where $\mathcal L$ is defined in equation \eqref{eq:loss_function}, and assume $\F$ to be invertible. Then, it holds for all $s = 1, \dots, S$
\begin{equation}
\Var^\mu[\Q(X) | X \in D_s] \le 2 \Var^{\mathcal U}[\S(\F^{-1}(U)) | U \in A_s] + \frac{2\epl}{\lambda(A_s)}.
\end{equation}
Moreover, we have
\begin{equation}
\Var[\widehat q_\sMC] \le 2 \sum_{s=1}^S \frac{\lambda(A_s)^2}{N_s} \Var^\mu[\S(\F^{-1}(U)) | U \in A_s] + 2 \epl \sum_{s=1}^S \frac{\lambda(A_s)}{N_s}.
\end{equation}
\end{theorem}
\begin{proof}
By Cauchy--Schwarz inequality and properties of conditional expectations, we have
\begin{equation}
\begin{aligned}
\Var^\mu[\Q(X) | X \in D_s] &\le 2 \left( \Var^\mu[\S(\E(X)) | X \in D_s] + \Var^\mu[\Q(X) - \S(\E(X)) | X \in D_s] \right) \\
&\le 2 \left( \Var^\mu[\S(\E(X)) | X \in D_s] + \Ex^\mu[(\Q(X) - \S(\E(X)))^2 | X \in D_s] \right) \\
&\le 2 \left( \Var^\mu[\S(\E(X)) | X \in D_s] + \frac{\Ex^\mu[(\Q(X) - \S(\E(X)))^2]}{\mu(D_s)} \right).
\end{aligned}
\end{equation}
Then, notice that by definition of $D_s$ and \cref{lem:mu_lambda}, we get $\mu(D_s) = \lambda(A_s)$ and
\begin{equation}
\begin{aligned}
\Var^\mu[\S(\E(X)) | X \in D_s] &= \Var^\mu[\S(\F^{-1}(\F(\E(X)))) | \F(\E(X)) \in A_s] \\
&= \Var^{\mathcal U}[\S(\F^{-1}(U)) | U \in A_s],
\end{aligned}
\end{equation}
which, together with the fact that 
\begin{equation}
\Ex^\mu[(\Q(X) - \S(\E(X)))^2] \le \mathcal L(\E, \D, \S) = \epl,
\end{equation}
implies the first desired result. The second estimate follows directly from the first by equation \eqref{eq:sMC_general_NeurAM}.
\end{proof}

Applying \cref{thm:bound_var}, we can obtain upper bounds for the variances of the stratified Monte Carlo estimators $\widehat q_\sMC^{(1)}$ and $\widehat q_\sMC^{(2)}$ with optimal and proportional allocation strategies, respectively. Specifically, using equation \eqref{eq:optimal_variances}, we deduce
\begin{equation} \label{eq:optimal_variances_bounds}
\begin{aligned}
\Var[\widehat q_\sMC^{(1)}] &\le \frac2N \left\{ \sum_{s=1}^S \lambda(A_s) \sqrt{\Var^{\mathcal U}[\S(\F^{-1}(U)) | U \in A_s]} \right\}^2 + \frac{2S\epl}N, \\
\Var[\widehat q_\sMC^{(2)}] &\le \frac2N \sum_{s=1}^S \lambda(A_s) \Var^{\mathcal U}[[\S(\F^{-1}(U)) | U \in A_s] + \frac{2S\epl}N.
\end{aligned}
\end{equation}
In \cref{thm:bound_var}, and consequently in equation \eqref{eq:optimal_variances_bounds}, we see that an additional term appears that depends on the accuracy of the NeurAM. On the other hand, the first term reduces, up to the constant factor $2$, to the variance of the stratified estimator for the one-dimensional model $\mathcal{S} \circ \mathcal{F}^{-1}$ with respect to the uniform distribution $\mathcal{U}([0,1])$. This allows us to apply existing theoretical results for one-dimensional stratification. In particular, we obtain the following result.
\begin{corollary} \label{cor:uniform_var_bound}
Let $\E, \D, \S$ satisfy $\mathcal L(\E, \D, \S) = \epl \ge 0$, where $\mathcal L$ is defined in equation \eqref{eq:loss_function}, and assume $\F$ to be invertible and $\S \circ \F^{-1}$ to be Lipschitz continuous. Using the uniform stratification of \cref{ex:uniform}, it holds
\begin{equation}
\Var[\widehat q_\sMC^{\mathrm u, (i)}] \le \frac{\norm{(\S \circ \F^{-1})'}^2_{L^\infty([0,1])}}{6 S^2 N} + \frac{2S\epl}N,
\end{equation}
for both $i=1,2$, i.e., optimal and proportional allocations, respectively.
\end{corollary}
\begin{proof}
Following \cite[Chapter V, Example 7.1]{AsG07}, we have
\begin{equation}
\Var^{\mathcal U}[[\S(\F^{-1}(U)) | U \in A_s] \le \frac{\norm{(\S \circ \F^{-1})'}^2_{L^\infty([0,1])}}{12 S^2},
\end{equation}
which, together with equation \eqref{eq:optimal_variances_bounds}, yields the desired result.
\end{proof}

\begin{remark}
\cref{cor:uniform_var_bound} shows that the variance scales as $\mathcal O(S^{-2})$, but only up to the point where the error introduced by the NeurAM, quantified by $\epl$, becomes dominant. Beyond this point, further refining the stratification may not reduce the variance and can even cause it to increase. By \cref{cor:uniform_var_bound}, the optimal number of strata is
\begin{equation}
S^* = \min \left\{ \left\lceil \left( \frac{\norm{(\S \circ \F^{-1})'}^2_{L^\infty([0,1])}}{6\epl} \right)^{1/3} \right\rceil, \; N \right\},
\end{equation}
which highlights that increasing the stratification is beneficial only when the NeurAM approximation is sufficiently accurate. Moreover, if we want the variance to vanish as $N \to \infty$, then even when $\epl \ll 1$, we cannot choose $S=N$, since the error term would remain of order $\mathcal O(\epl)$. We finally remark that this analysis is independent of the dimension $d$ of the domain $\mathbb D$ of the model $\Q$, which is one of the main advantages of this approach.
\end{remark}

After investigating optimal sample allocations for a fixed collection of strata, in the next section we consider the problem of determining an optimal stratification $\{ A_s \}_{s=1}^S$ that leads to reduced variances in~\eqref{eq:optimal_variances}. 

\subsection{A heuristic algorithm to improve the stratification} \label{sec:heuristic}

In this section we focus on determining an \emph{optimal} stratification leading to estimators with minimal variance. 
Considering the two allocation strategies discussed in the previous section, and due to the bound \eqref{eq:bound_variances}, this amounts to solving the minimization problems
\begin{equation} \label{eq:optimization_strata_M}
\begin{aligned}
(\mathrm M1) \quad& \min_{0 = a_0 < a_1 < \cdots < a_{S-1} < a_S = 1} \sum_{s=1}^S \lambda(A_s) \sqrt{\Var^\mu[\Q(X) | X \in D_s]}, \\
(\mathrm M2) \quad& \min_{0 = a_0 < a_1 < \cdots < a_{S-1} < a_S = 1} \sum_{s=1}^S \lambda(A_s) \Var^\mu[\Q(X) | X \in D_s]. \\
\end{aligned}
\end{equation}
However, (M1) and (M2) might be challenging and computationally expensive to solve, so that it might be more effective to simply use a uniform stratification as discussed in \cref{ex:uniform}. 
In addition, we would need to solve nonlinear optimization problems in high dimensions, where the variances in the objective functions are not even known explicitly, but must be approximated. 
Therefore, we propose a heuristic strategy that iteratively refines the strata, while keeping the computational cost limited. 

We proceed as follows. Let $\mathcal A^{(1)}, \Lambda^{(1)}, \Sigma^{(1)}$ be defined as
\begin{equation}
\begin{aligned}
\mathcal A^{(1)} &= \{ a_0^{(1)}, a_1^{(1)} \} = \{ 0, 1 \}, \\
\Lambda^{(1)} &= \{ \lambda_1^{(1)} \}, \quad \lambda_1^{(1)} = a_1^{(1)} - a_0^{(1)} = 1, \\
\Sigma^{(1)} &= \{ \sigma_1^{(1)} \}, \quad \sigma_1^{(1)} = \Var^\mu \left[ \Q(X) | \F(\E(X)) \in [a_0^{(1)}, a_1^{(1)}] \right] = \Var^\mu[\Q(X)].
\end{aligned}
\end{equation}
Then, at each iteration we bisect the interval that contributes the most to the overall variance.
In particular, for all $j = 2, \dots, S$, select the index $i^*$ such that
\begin{equation}
i^* = 
\begin{cases}
\argmax_{i \in \{ 1, \dots, j-1 \}} \lambda_i^{(j-1)} \sqrt{\sigma_i^{(j-1)}}, & \text{ for allocation (1)}, \\ 
\argmax_{i \in \{ 1, \dots, j-1 \}} \lambda_i^{(j-1)} \sigma_i^{(j-1)}, & \text{ for allocation (2)}, \\ 
\end{cases}
\end{equation}
and pick a point $a^* \in [a_{i^*-1}^{(j-1)}, a_{i^*}^{(j-1)}]$. Then, define $\mathcal A^{(j)}, \Lambda^{(j)}, \Sigma^{(j)}$ as
\begin{equation}
\begin{aligned}
\mathcal A^{(j)} &= \{ a_i^{(j)} \}_{i=0}^j = \mathcal A^{(j-1)} \cup \{ a^* \}, \\
\Lambda^{(j)} &= \{ \lambda_i^{(j)} \}_{i=0}^j, \quad \lambda_i^{(j)} = a_i^{(j)} - a_{i-1}^{(j)}, \\
\Sigma^{(j)} &= \{ \sigma_i^{(j)} \}_{i=0}^j, \quad \sigma_i^{(j)} = \Var^\mu \left[ \Q(X) | \F(\E(X)) \in [a_{i-1}^{(j)}, a_i^{(j)}] \right].
\end{aligned}
\end{equation}
The resulting stratification is given by the points in $\mathcal A^{(S)}$ after the last iteration $j = S$. 
Finally, we need to specify how to select $a^*$ at each step. 
We emphasize that, due to the law of total variance, any point $a^*$ can be chosen, meaning that this procedure does not increase the variance of the resulting estimator independently of the selected point $a^*$. 
Nevertheless, we list here two alternatives that can be used in practice:
\begin{itemize}[leftmargin=*]
\item interval mid-point, i.e., $a^*_{\mathrm h} = (a_{i^*-1}^{(j-1)} + a_{i^*}^{(j-1)})/2$;
\item optimal value given by
\begin{equation}
a^*_{\mathrm o} = 
\begin{cases}
\begin{aligned}
\argmin_{a^* \in [a_{i^*-1}^{(j-1)}, a_{i^*}^{(j-1)}]} &\left\{ (a^* - a_{i^*-1}^{(j-1)}) \sqrt{\Var^\mu \left[ \Q(X) | \F(\E(X)) \in [a_{i^*-1}^{(j-1)}, a^*] \right]} \right. \\
&\left. + (a_{i^*}^{(j-1)} - a^*) \sqrt{\Var^\mu \left[ \Q(X) | \F(\E(X)) \in [a^*, a_{i^*}^{(j-1)}] \right]} \right\},
\end{aligned}
& \text{ for (1)}, \\
\begin{aligned}
\argmin_{a^* \in [a_{i^*-1}^{(j-1)}, a_{i^*}^{(j-1)}]} &\left\{ (a^* - a_{i^*-1}^{(j-1)}) \Var^\mu \left[ \Q(X) | \F(\E(X)) \in [a_{i^*-1}^{(j-1)}, a^*] \right] \right. \\
&\left. + (a_{i^*}^{(j-1)} - a^*) \Var^\mu \left[ \Q(X) | \F(\E(X)) \in [a^*, a_{i^*}^{(j-1)}] \right] \right\},
\end{aligned}
& \text{ for (2)}. 
\end{cases}
\end{equation}
\end{itemize}
Note that, in the latter case, the minimum is well-defined since the function to be minimized is continuous on a compact interval. 
Moreover, this choice provides the best variance reduction at each step, but it requires additional computations (one needs to solve a minimization problem), even if the surrogate model $\Q_{\mathrm S}$ can be use to estimate the variance of the strata.
Also, the optimization problem is, in this case, only one-dimensional, unlike (M1) and (M2) in \eqref{eq:optimization_strata_M}.

\section{Application to multifidelity estimators} \label{sec:multifidelity} 

The NeurAM-based stratification presented here can be integrated with other variance reduction methods to achieve even greater precision in the resulting estimates.
For example, it can be combined with multifidelity Monte Carlo estimators, as explained next. 
Let $\Q^\HF = \Q$ and assume that a cheap low-fidelity approximation $\Q^\LF$ of the high-fidelity model $\Q^\HF$ is also known. 
Let $w = \mathcal C^\LF / \mathcal C^\HF$ be the cost ratio between the two model fidelities. 
The available computational budget $N$, expressed in terms of high-fidelity model evaluations, can then be assembled from the high- and low-fidelity model evaluations $N^\HF$ and $N^\LF$, respectively, as
\begin{equation}
N = N^\HF + w N^\LF.
\end{equation}
It is possible to determine $N^\HF$ and $N^\LF$ so the resulting multifidelity estimator has minimum variance, or, in other words, an \emph{optimal allocation} can be computed in closed-form as~\cite{PWG16}
\begin{equation} \label{eq:optimal_allocation}
N^\HF = \frac{N}{1 + w \beta} \quad \text{and} \quad N^\LF = \beta N^\HF = \frac{\beta N}{1 + w \beta}, \quad \text{with} \quad \beta = \sqrt{\frac{\rho^2}{w(1 - \rho^2)}},
\end{equation}
where $\rho$ is the Pearson correlation coefficient between the two fidelities
\begin{equation} \label{eq:correlation}
\rho = \frac{\Cov^\mu \left[ \Q^\HF(X), \Q^\LF(X) \right]}{\sqrt{\Var^\mu[\Q^\HF(X)] \Var^\mu \left [\Q^\LF(X) \right]}}.
\end{equation}
The multifidelity Monte Carlo estimator is then defined as
\begin{equation} \label{eq:MFMC}
\widehat q_\MFMC = \frac1{N^\HF} \sum_{n=1}^{N^\HF} \Q^\HF(x_n) - \alpha \left( \frac1{N^\HF} \sum_{n=1}^{N^\HF} \Q^\LF(x_n) - \frac1{N^\LF} \sum_{n=1}^{N^\LF} \Q^\LF(x_n) \right),
\end{equation}
where the optimal value for the coefficient $\alpha$ is
\begin{equation} \label{eq:optimal_coefficient}
\alpha = \frac{\Cov^\mu \left[ \Q^\HF(X), \Q^\LF(X) \right]}{\Var^\mu \left[ \Q^\LF(X) \right]},
\end{equation}
and the samples $\{ x_n \}_{n=1}^{N^\LF}$ with $N^\LF > N^\HF$ are drawn from the distribution $\mu$. 
We remark that the estimator $\widehat q_\MFMC$ is unbiased, i.e., $\Ex[\widehat q_\MFMC] = q$, and has variance
\begin{equation} \label{eq:variance_MFMC}
\Var \left[ \widehat q_\MFMC \right] = \frac1N \Var^\mu \left[ \Q^\HF(X) \right] \left( \sqrt{1 - \rho^2} + \sqrt{w\rho^2} \right)^2 = \Var \left[ \widehat q_\MC \right] \left( \sqrt{1 - \rho^2} + \sqrt{w\rho^2} \right)^2.
\end{equation}
This implies that $\Var \left[ \widehat q_\MFMC \right] < \Var \left[ \widehat q_\MC \right]$, leading to variance reduction under the condition $\rho^2 > 4w/(1 + w)^2$. 

A multifidelity approach can be combined with stratified sampling by replacing the Monte Carlo estimator with the multifidelity Monte Carlo estimator in each stratum. In particular, for a NeurAM based stratification $\{ (A_s,D_s) \}_{s=1}^S$ and a collection of samples $\{ \{ x_n^{(s)} \}_{n=1}^{N^\LF_s} \}_{s=1}^S$ with $x_n^{(s)} \sim \mu |_{D_s}$, we have
\begin{equation} \label{eq:estimator_sMFMC_def}
\begin{aligned}
\widehat q_\sMFMC &= \sum_{s=1}^S \lambda(A_s) \left[ \frac1{N^\HF_s} \sum_{n=1}^{N^\HF_s} \Q^\HF(x_n^{(s)}) \right. \\
&\hspace{2.5cm} \left. - \alpha_s \left( \frac1{N^\HF_s} \sum_{n=1}^{N^\HF_s} \Q^\LF(x_n^{(s)}) - \frac1{N^\LF_s} \sum_{n=1}^{N^\LF_s} \Q^\LF(x_n^{(s)}) \right) \right],
\end{aligned}
\end{equation}
where, given the available computational budget in each stratum $\{ N_s \}_{s=1}^S$ such that $N = \sum_{s=1}^S N_s$ and still following \cite{PWG16}, an optimal allocation is given by
\begin{equation}
N^\HF_s = \frac{N_s}{1 + w \beta_s} \quad \text{and} \quad N^\LF = \beta_s N^\HF_s = \frac{\beta_s N_s}{1 + w \beta_s}, \quad \text{with} \quad \beta_s = \sqrt{\frac{\rho_s^2}{w(1 - \rho_s^2)}},
\end{equation}
where $\rho_s$ is the Pearson correlation coefficient in each stratum
\begin{equation}
\rho_s = \frac{\Cov^\mu \left[ \Q^\HF(X), \Q^\LF(X) | X \in D_s \right]}{\sqrt{\Var^\mu[\Q^\HF(X) | X \in D_s] \Var^\mu [\Q^\LF(X)| X \in D_s]}}.
\end{equation}
Moreover, the optimal values for the coefficients $\{ \alpha_s \}_{s=1}^S$ are
\begin{equation}
\alpha_s = \frac{\Cov^\mu \left[ \Q^\HF(X), \Q^\LF(X) | X \in D_s \right]}{\Var^\mu \left[ \Q^\LF(X) | X \in D_s \right]}.
\end{equation}
We notice that the stratified multifidelity Monte Carlo estimator remains unbiased, i.e., it holds $\Ex[\widehat q_\sMFMC] = q$, and, due to equations \eqref{eq:sMC_general_NeurAM} and \eqref{eq:variance_MFMC}, its variance is
\begin{equation} \label{eq:variance_sMFMCM}
\Var[\widehat q_\sMFMC] = \sum_{s=1}^S \frac{\lambda(A_s)^2}{N_s} \Var^\mu[\Q^\HF(X) | X \in D_s] \left( \sqrt{1 - \rho_s^2} + \sqrt{w\rho_s^2} \right)^2,
\end{equation}
which is dependent on the local correlations $\{ \rho_s \}_{s=1}^S$ in each stratum. In the next section we study how the variance can be optimized and provide conditions under which this approach is more effective than standard multifidelity Monte Carlo.

\begin{remark}
NeurAM is applicable to both low- and high-fidelity models to establish a shared space~\cite{ZGS25}. This space can be used to reparameterize the low-fidelity model, thereby enhancing the correlation $\rho$ between the fidelities. For the remainder of this discussion, we assume that $\Q^\LF$ has already undergone reparameterization and demonstrates a strong correlation with $\Q^\HF$, satisfying the condition $\rho^2 > 4w/(1 + w)^2$. 
Although this reparameterization would provide an additional reduction in variance, it is not needed for the discussion that follows.
\end{remark}

\subsection{Allocation strategies and analysis of the variance}

Similar to the discussion in \cref{sec:variance_sMC}, we analyze two allocation strategies for the stratified multifidelity Monte Carlo estimator. 
An allocation that yields estimators with minimum variance involves selecting $N_s$ samples in stratum $s$ according to
\begin{equation} \label{eq:allocation1_MF}
N_s^{(1)} = \frac{\lambda(A_s)\sqrt{\Var^\mu[\Q(X) | X \in D_s]} \left( \sqrt{1 - \rho_s^2} + \sqrt{w\rho_s^2} \right)}{\sum_{r=1}^S \lambda(A_r)\sqrt{\Var^\mu[\Q(X) | X \in D_r]} \left( \sqrt{1 - \rho_r^2} + \sqrt{w\rho_r^2} \right)} N,
\end{equation}
while a proportional allocation  is still
\begin{equation} \label{eq:allocation2_MF}
N_s^{(2)} = \lambda(A_s)\,N.
\end{equation}
Replacing the two allocation strategies in equation \eqref{eq:variance_sMFMCM} gives
\begin{equation}
\begin{aligned}
\Var[\widehat q_\sMFMC^{(1)}] &= \frac1N \left\{ \sum_{s=1}^S \lambda(A_s) \sqrt{\Var^\mu[\Q(X) | X \in D_s]} \left( \sqrt{1 - \rho_s^2} + \sqrt{w\rho_s^2} \right) \right\}^2, \\
\Var[\widehat q_\sMFMC^{(2)}] &= \frac1N \sum_{s=1}^S \lambda(A_s) \Var^\mu[\Q(X) | X \in D_s] \left( \sqrt{1 - \rho_s^2} + \sqrt{w\rho_s^2} \right)^2,
\end{aligned}
\end{equation}
and, from Jensen's inequality, it is also easy to see that
\begin{equation}
\Var[\widehat q_\sMFMC^{(1)}] \le \Var[\widehat q_\sMFMC^{(2)}].
\end{equation}
It is evident and straightforward to verify that, if all the local correlations satisfy the condition that ensures variance reduction, i.e., $\rho_s^2 > 4w/(1+w)^2$ for all $s = 1, \dots, S$, then
\begin{equation}
\Var[\widehat q_\sMFMC^{(1)}] \le \Var[\widehat q_\sMC^{(1)}] \quad \text{and} \quad 
\Var[\widehat q_\sMFMC^{(2)}] \le \Var[\widehat q_\sMC^{(2)}].
\end{equation}
Moreover, the following result provides conditions under which stratification leads to multifidelity Monte Carlo estimator with reduced variance.

\begin{proposition} \label{pro:variance_MF}
Let $\widehat q_\sMFMC^{(1)}$ and $\widehat q_\sMFMC^{(2)}$ be the estimators defined in equation \eqref{eq:estimator_sMFMC_def} with computational budget $N$ and allocations given by \eqref{eq:allocation1_MF} and \eqref{eq:allocation2_MF}, respectively, and let $\rho$ and $\{ \rho_s \}_{s=1}^S$ be the overall and intra-stratum correlations between the high- and low-fidelity model. 
Assuming $\rho^2 \ge 4w/(1+w)^2$, the following two statements hold
\begin{enumerate}[leftmargin=*]
\item if $\bar\rho \ge \rho$, where $\bar\rho = \sum_{s=1}^S \lambda(A_s)\,\rho_s$, then $\Var[\widehat q_\sMFMC^{(1)}] \le \Var[\widehat q_\MFMC]$.
\item if $\rho_s \ge \rho$ for all $s = 1, \dots, S$, then $\Var[\widehat q_\sMFMC^{(2)}] \le \Var[\widehat q_\MFMC]$,
\end{enumerate}
\end{proposition}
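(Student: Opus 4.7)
The plan is to reduce both claims to comparisons involving the scalar variance factor
\[
f(r) \;:=\; \sqrt{1-r^2} + \sqrt{w}\,r,
\]
since $\Var[\widehat q_\MFMC] = \tfrac{V}{N} f(\rho)^2$ with $V := \Var^\mu[\Q^\HF(X)]$, and each stratum contribution in \eqref{eq:variance_sMFMCM} has the same shape $V_s f(\rho_s)^2$ with $V_s := \Var^\mu[\Q^\HF(X)\,|\,X\in D_s]$. Before starting, I would record three facts: (a) $f$ is concave on $[0,1]$, since $f''(r) = -(1-r^2)^{-3/2} < 0$; (b) $f$ is nonincreasing at $\rho$, because the hypothesis $\rho^2 \geq 4w/(1+w)^2$ puts $\rho$ in the regime where $f'(r) = \sqrt{w} - r/\sqrt{1-r^2} \leq 0$; and (c) the law of total variance gives $\sum_s \lambda(A_s) V_s \leq V$.

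I would dispatch claim 2 first. Under $\rho_s \geq \rho$ and the monotonicity recorded in (b), we obtain $f(\rho_s)^2 \leq f(\rho)^2$ pointwise. Inserting the proportional allocation $N_s^{(2)} = \lambda(A_s) N$ into \eqref{eq:variance_sMFMCM}, factoring $f(\rho)^2$ out of the sum, and invoking the law of total variance yields
\[
\Var[\widehat q_\sMFMC^{(2)}] \;\leq\; \frac{f(\rho)^2}{N}\sum_{s=1}^S \lambda(A_s)\,V_s \;\leq\; \frac{V f(\rho)^2}{N} \;=\; \Var[\widehat q_\MFMC].
\]

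For claim 1, inserting the optimal allocation $N_s^{(1)}$ reduces the target to
\[
\sum_{s=1}^S \lambda(A_s)\sqrt{V_s}\,f(\rho_s) \;\leq\; \sqrt{V}\,f(\rho).
\]
My plan is to split each summand as $\sqrt{V_s}f(\rho_s) = \sqrt{V_s(1-\rho_s^2)} + \sqrt{wV_s\rho_s^2}$, apply Jensen's inequality for the concave $\sqrt{\cdot}$ separately to each of the two families, and then re-assemble the result as $\sqrt{V'}\,f(\rho')$, where $V' := \sum_s \lambda(A_s) V_s \leq V$ and $\rho'^2 := \sum_s \lambda(A_s) V_s \rho_s^2 / V'$. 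It will then remain to show $f(\rho') \leq f(\rho)$; by the monotonicity from (b) this amounts to $\rho' \geq \rho$, and here I would bring in the hypothesis $\bar\rho \geq \rho$ together with the Jensen bound $\sum_s \lambda(A_s) f(\rho_s) \leq f(\bar\rho) \leq f(\rho)$ (concavity plus monotonicity of $f$) to control the mixed correlations.

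The main obstacle is precisely this last bridging step: the hypothesis controls the unweighted mean $\bar\rho = \sum_s \lambda(A_s) \rho_s$, whereas the Jensen decomposition naturally produces the variance-weighted correlation $\rho'$, and the two need not coincide. Closing the gap will require a careful Cauchy--Schwarz or rearrangement argument that exploits the joint structure of $(V_s, \rho_s)$ and the laws of total variance and total covariance for both fidelities, in order to compare $\sum_s \lambda(A_s) V_s \rho_s^2$ with $\rho^2 \sum_s \lambda(A_s) V_s$. Once that comparison is established, $\sqrt{V'}f(\rho') \leq \sqrt{V}f(\rho)$ follows immediately from $V' \leq V$ and $f \geq 0$, completing the proof.
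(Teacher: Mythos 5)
Your treatment of statement 2 is correct and coincides with the paper's argument: monotonicity of the variance-reduction factor beyond the threshold $4w/(1+w)^2$ plus the law of total variance.

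For statement 1, however, the gap you flag at the end is not a technical loose end that a ``careful Cauchy--Schwarz or rearrangement argument'' can close: the bridge you need, namely $\rho'\ge\rho$ with $\rho'^2=\sum_s\lambda(A_s)V_s\rho_s^2/\sum_s\lambda(A_s)V_s$ and $V_s=\Var^\mu[\Q(X)\,|\,X\in D_s]$, simply does not follow from $\bar\rho\ge\rho$, because the two averages carry different weights. Your split-Jensen step entangles the conditional variances with the correlations, so the strata with large $V_s$ dominate $\rho'$; if a low-variance stratum carries the high intra-stratum correlation (say $\lambda_1=\lambda_2=\tfrac12$, $V_1=\varepsilon$, $\rho_1=1$, $V_2=1$, $\rho_2=0.5$, with the overall $\rho\approx0.6$ generated mostly by between-strata covariation of the conditional means), then $\bar\rho=0.75\ge\rho$ while $\rho'\approx0.5<\rho$, so the intended comparison of $\sum_s\lambda(A_s)V_s\rho_s^2$ with $\rho^2\sum_s\lambda(A_s)V_s$ fails even though all hypotheses of the proposition hold. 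In other words, the plan as written cannot be completed along the proposed route.

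The paper's proof avoids this by decoupling the variances from the correlations \emph{before} any averaging of the $\rho_s$: Cauchy--Schwarz with weights $\lambda(A_s)$ gives
\begin{equation}
\left(\sum_{s=1}^S\lambda(A_s)\sqrt{V_s}\left(\sqrt{1-\rho_s^2}+\sqrt{w\rho_s^2}\right)\right)^2
\le\left(\sum_{s=1}^S\lambda(A_s)V_s\right)\left(\sum_{s=1}^S\lambda(A_s)\left(\sqrt{1-\rho_s^2}+\sqrt{w\rho_s^2}\right)^2\right),
\end{equation}
so the correlations now appear only with the plain $\lambda(A_s)$ weights; Jensen's inequality applied to the concave map $x\mapsto\bigl(\sqrt{1-x^2}+\sqrt{wx^2}\bigr)^2$ (note it is the \emph{squared} factor whose concavity is needed here, not just that of $\sqrt{1-x^2}+\sqrt{w}\,x$) bounds the second sum by the factor evaluated at $\bar\rho$, and then the hypothesis $\bar\rho\ge\rho$ enters through monotonicity, with the law of total variance handling the first sum. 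If you reorder your argument in this way, statement 1 follows exactly as in the paper; as it stands, the proposal proves only statement 2.
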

\begin{proof}
First, statement $(ii)$ follows from the law of total variance and the fact that the function $f(x) = (\sqrt{1 - x^2} + \sqrt{wx^2})^2$ is decreasing if $x^2 \ge 4w/(1+w)^2$. Then, applying Cauchy--Schwarz inequality, we have
\begin{equation}
\Var[\widehat q_\sMFMC^{(1)}] \le \frac1N \left( \sum_{s=1}^S \lambda(A_s) \Var^\mu[\Q(X) | X \in D_s] \right) \left( \sum_{s=1}^S \lambda(A_s) \left( \sqrt{1 - \rho_s^2} + \sqrt{w\rho_s^2} \right)^2 \right),
\end{equation}
which, due to the concavity of the function $f$ and from Jensen's inequality, leads to
\begin{equation}
\begin{aligned}
\Var[\widehat q_\sMFMC^{(1)}] &\le \frac1N \left( \sum_{s=1}^S \lambda(A_s) \Var^\mu[\Q(X) | X \in D_s] \right) \\
&\hspace{2cm}\times \left( \sqrt{1 - \left( \sum_{s=1}^S \lambda(A_s) \rho_s \right)^2} + \sqrt{w \left( \sum_{s=1}^S \lambda(A_s) \rho_s \right)^2} \right)^2 \\
&= \frac1N \left( \sum_{s=1}^S \lambda(A_s) \Var^\mu[\Q(X) | X \in D_s] \right) \left( \sqrt{1 - \bar\rho^2} + \sqrt{w\bar\rho^2} \right)^2.
\end{aligned}
\end{equation}
Using again the law of total variance and the fact that the function $f$ is decreasing gives statement $(i)$, which completes the proof.
\end{proof}

\begin{remark} \label{rem:variance_MF}
In \cref{pro:variance_MF}, the condition for allocation (1) is weaker than the condition for allocation (2), but allocation (2) is easier to apply, since it only requires the probability mass of each strata. Moreover, we notice that \cref{pro:variance_MF} only provides sufficient conditions for variance reduction. In particular, a smaller variance can also be obtained even if the assumptions are not satisfied.
\end{remark}
The heuristic algorithm proposed in \cref{sec:heuristic} can still be applied in the context of stratified multifidelity estimators.
However, at each iteration, one must remember to always multiply the variance $\widetilde\sigma$ of the high-fidelity model in each stratum by a coefficient $\widetilde\eta$ given by
\begin{equation}
\widetilde\eta = \left( \sqrt{1 - \widetilde\rho^2} + \sqrt{w \widetilde\rho^2} \right)^2,
\end{equation}
which depends on the correlation $\widetilde\rho$ in the stratum and quantifies the variance reduction.

\section{Numerical experiments}\label{sec:experiments}

\begin{figure}
\begin{center}
\includegraphics{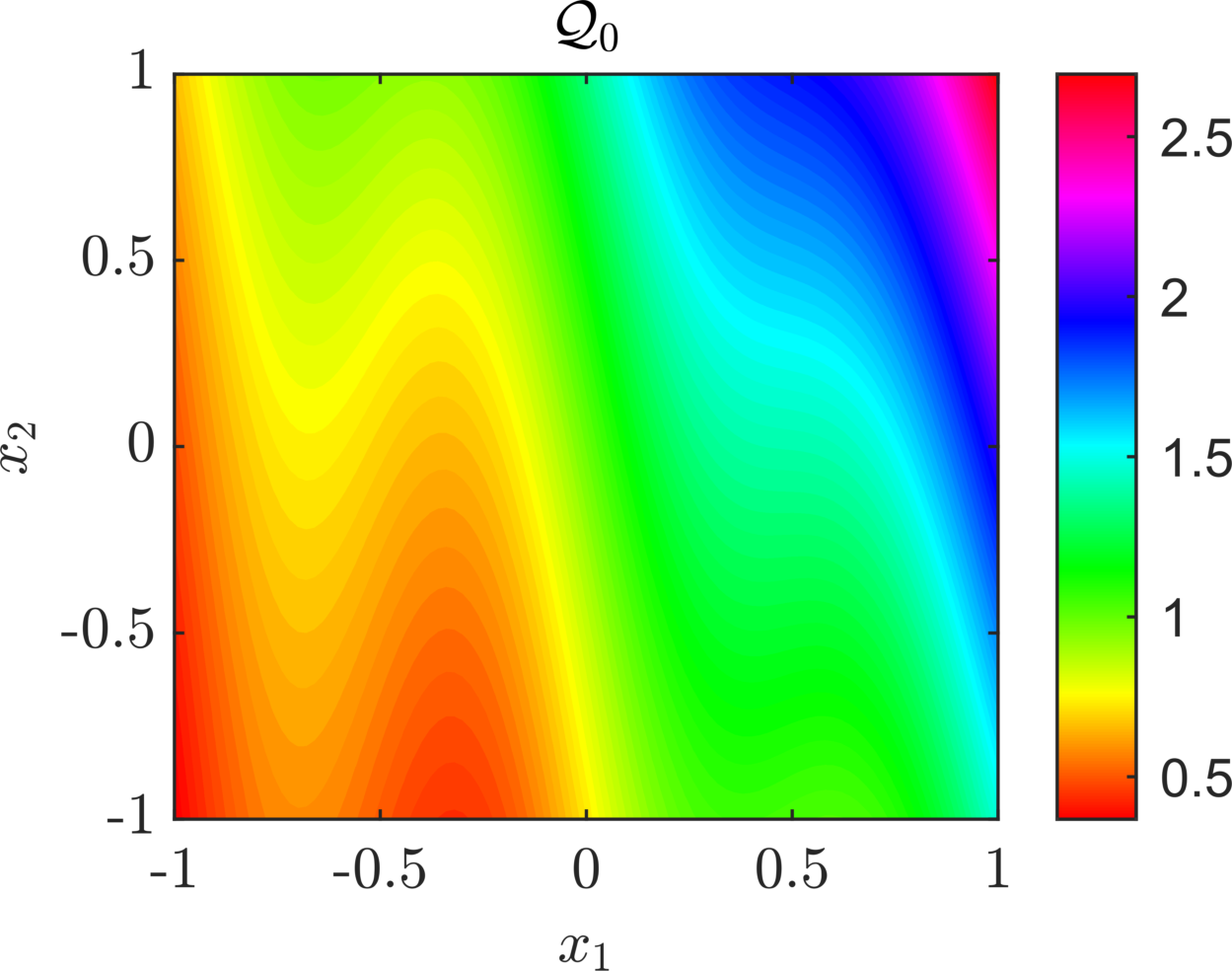}
\end{center}
\caption{Contour plot of the model $\Q_0$ which is used as a test case for the numerical experiments in \cref{sec:num_param,sec:num_heuristic,sec:num_AS,sec:num_multifidelity}.}
\label{fig:model}
\end{figure}

In this section we analyze the performance and the properties of NeurAM-based stratified estimators. 
We first study in \cref{sec:num_param} how their convergence depends on the quantities $M,K$ and on the number of strata $S$. 
We remind the reader that $M$ represents the size of the training dataset used to evaluate the loss function \eqref{eq:loss_NeurAM_approx}, while $K$ is the number of latent space samples used to approximate the CDF in equation \eqref{eq:CDF}.
Next, in \cref{sec:num_heuristic}, we show that the heuristic algorithm introduced in \cref{sec:heuristic} leads to a lower variance with respect to the uniform stratification of the unit interval.
In \cref{sec:num_AS}, we compare the proposed stratification strategy with a similar approach, where NeurAM is replaced by active subspace (AS) -- a linear dimensionality reduction technique \cite{CDW14}.
Then, in \cref{sec:num_multifidelity}, we verify that stratified sampling can be successfully combined with multifidelity estimators for additional variance reduction. 

In all the numerical experiments in \cref{sec:num_param,sec:num_heuristic,sec:num_AS,sec:num_multifidelity} we consider the domain $\mathbb D = [-1,1]^2$, the distribution of the input parameters $\mu_0 = \mathcal U([-1,1]^2)$, and the two-dimensional function $\Q_0 \colon \mathbb D \to \R$ 
\begin{equation} \label{eq:Q0}
\Q_0(x) = e^{0.7x_1 + 0.3x_2} + 0.15\sin(2\pi x_1),
\end{equation}
which has already been used in the context of sampling estimators for uncertainty propagation~\cite{GEG18,ZGS24a,ZGS24b,ZGS25}. 
Its contour plot is shown in \cref{fig:model}, and the quantity of interest $q$ can be computed explicitly as
\begin{equation}
q = \Ex^{\mu_0}[\Q_0(X)] = \frac{25}{21} \left( e^{-1} - e^{-\frac25} - e^{\frac25} + e\right).
\end{equation}

Finally, in \cref{sec:num_highDimensional,sec:num_other,sec:Darcy} we consider higher-dimensional models, we compare our approach to other sampling-based variance reduction strategies, and study a classical UQ test problem, a Darcy-flow problem, involving partial differential equations, respectively. In particular, we show that our methodology, differently from standard stratified sampling, has the potential to maintains its desirable properties in high dimensions.
The code used to perform the following numerical experiments is available at the GitHub repository \url{https://github.com/AndreaZanoni/StratifiedSamplingNeurAM}.

\begin{remark}
The setup is consistent for all numerical experiments in this section.
Specifically, the NeurAM encoder $\widetilde\E(\cdot; \mathfrak e)$, decoder $\widetilde\D(\cdot; \mathfrak d)$, and surrogate model $\widetilde\S(\cdot; \mathfrak s)$ are each parameterized as fully connected neural networks with $2$ hidden layers of $8$ neurons each, and $\operatorname{tanh}$ activation.
Training is performed by minimizing \eqref{eq:loss_NeurAM_approx} using the Adam optimizer with a fixed learning rate of $10^{-3}$ over 10,000 epochs. 
Moreover, in each experiment, we compute 1,000 realizations of the stratified Monte Carlo estimator, and use the empirical mean and standard deviation from these realizations to draw a Gaussian approximation in the plots.
The computational budget is always denoted by $N$ and does not include the $M$ samples employed to train NeurAM, which could, however, be reused in computationally expensive real applications. 
\end{remark}

\subsection{Sensitivity to $M$, $K$ and $S$} \label{sec:num_param}

\begin{figure}
\begin{center}
\includegraphics{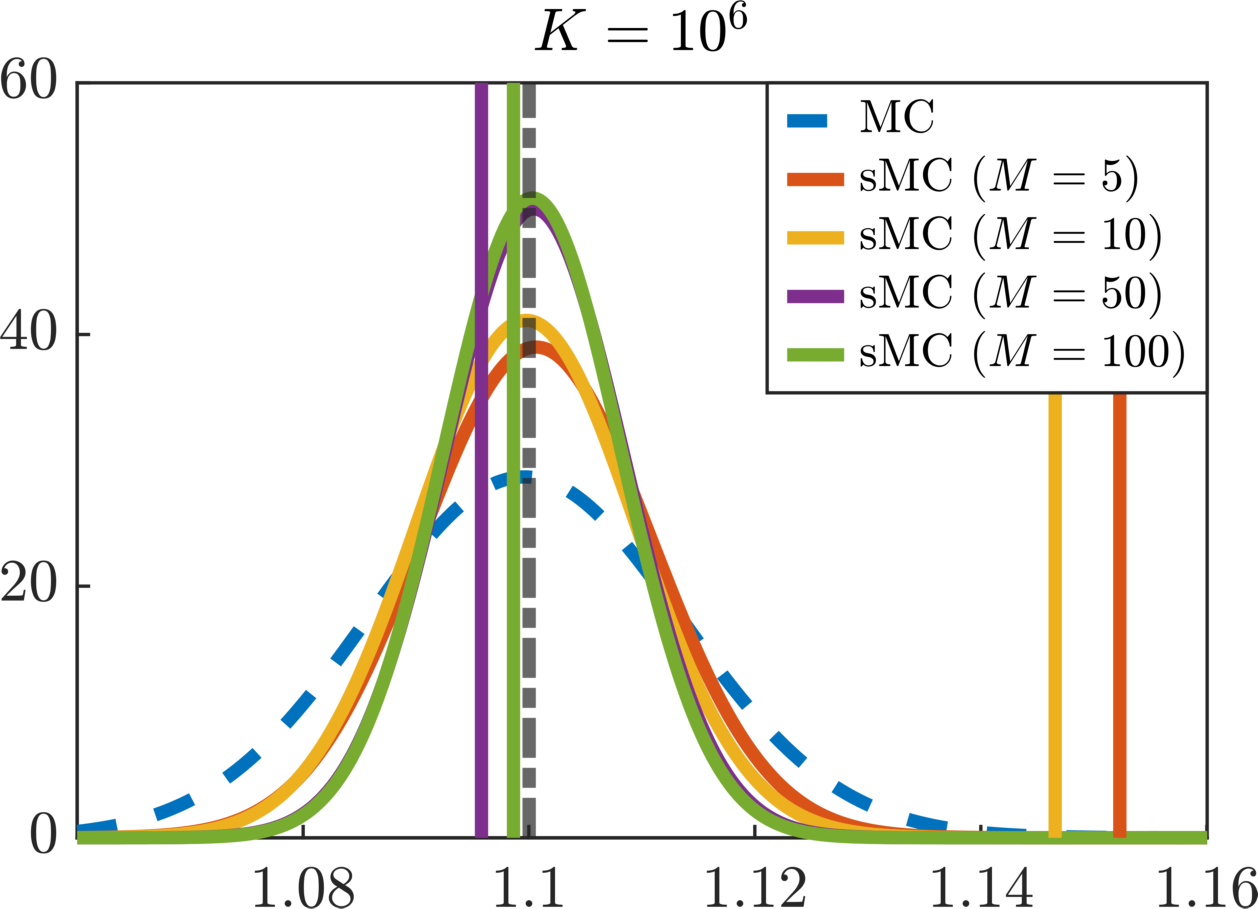} \hspace{1cm}
\includegraphics{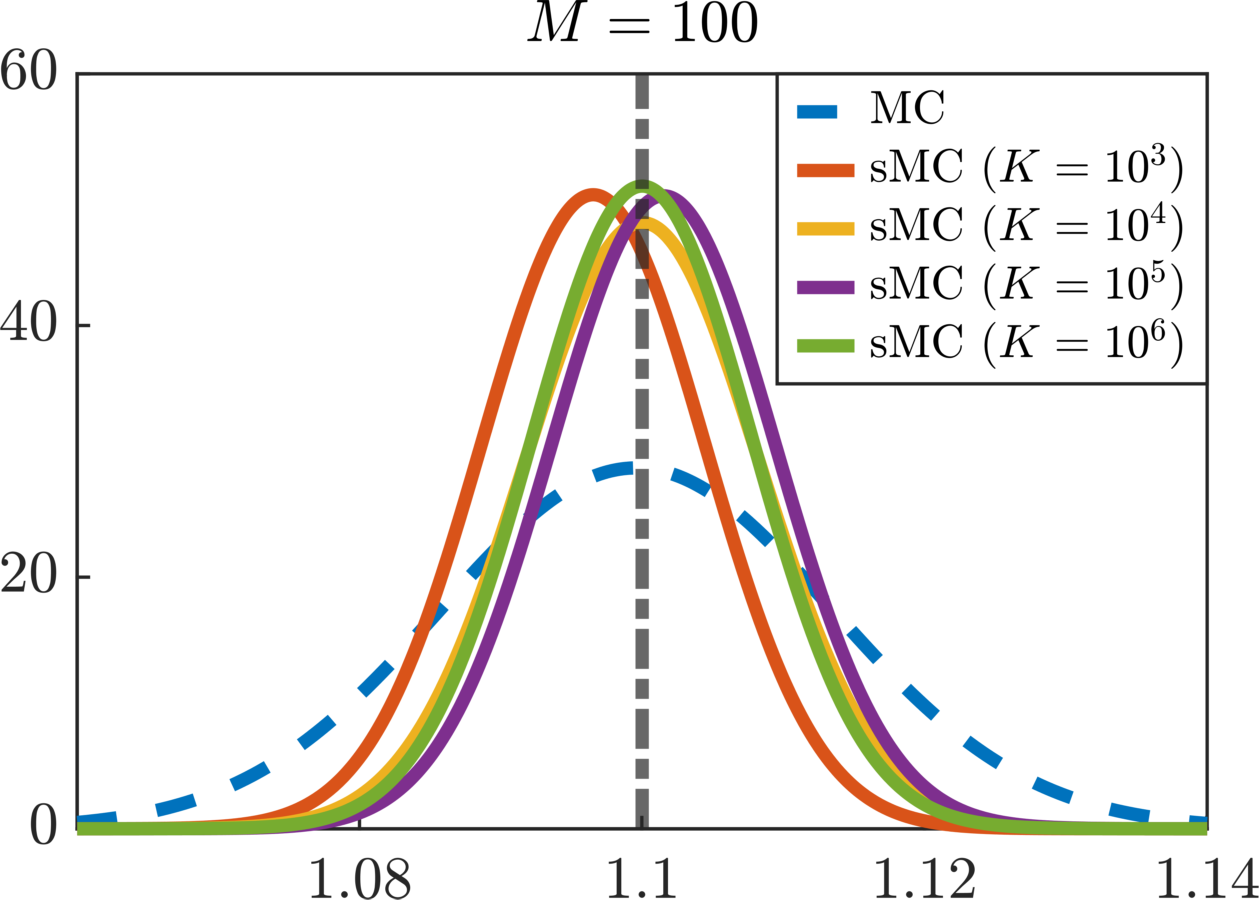}
\end{center}
\vspace{0.25cm}
\begin{center}
\begin{tabular}{c|cccc}
\toprule
\diagbox{$M$}{$K$} & $K = 10^3$ & $K = 10^4$ & $K = 10^5$ & $K = 10^6$ \\
\midrule
$M = 5$ & 1.29e-4 (6.68e-01) & 1.00e-4 (5.18e-01) & 9.63e-5 (4.99e-01) & 1.05e-4 (5.44e-01) \\
$M = 10$ & 6.71e-5 (3.48e-01) & 7.01e-5 (3.63e-01) & 7.07e-5 (3.66e-01) & 9.41e-5 (4.88e-01) \\
$M = 50$ & 7.96e-5 (4.12e-01) & 6.99e-5 (3.62e-01) & 6.53e-5 (3.38e-01) & 6.38e-5 (3.31e-01) \\
$M = 100$ & 7.47e-5 (3.87e-01) & 6.85e-5 (3.55e-01) & 6.51e-5 (3.37e-01) & 6.09e-5 (3.16e-01) \\
\bottomrule
\end{tabular}
\end{center}
\caption{\emph{TOP}: Comparison between standard Monte Carlo estimator $\widehat q_\MC$ (dashed line) and the proposed estimator $\widehat q_\sMC$ (solid line), varying the number of data $M = 5, 10, 50, 100$ (left), and $K = 10^3, 10^4, 10^5, 10^6$ (right), used to learn the NeurAM and the CDF, respectively. The the gray dash-dotted vertical line represents the exact value of the quantity of interest, while the colored solid vertical lines (left) represent the bias obtained using only the surrogate model $\Q_{\mathrm S}$ for the different values of $M$. \emph{BOTTOM}: Mean squared error (MSE) of the estimators to be compared with the value 1.93e-4 for Monte Carlo obtained with $N=1024$. The numbers in parentheses denote the ratio between the MSE of each estimator and the Monte Carlo reference value.}
\label{fig:dependence_KM}
\end{figure}

\begin{figure}
\begin{center}
\includegraphics{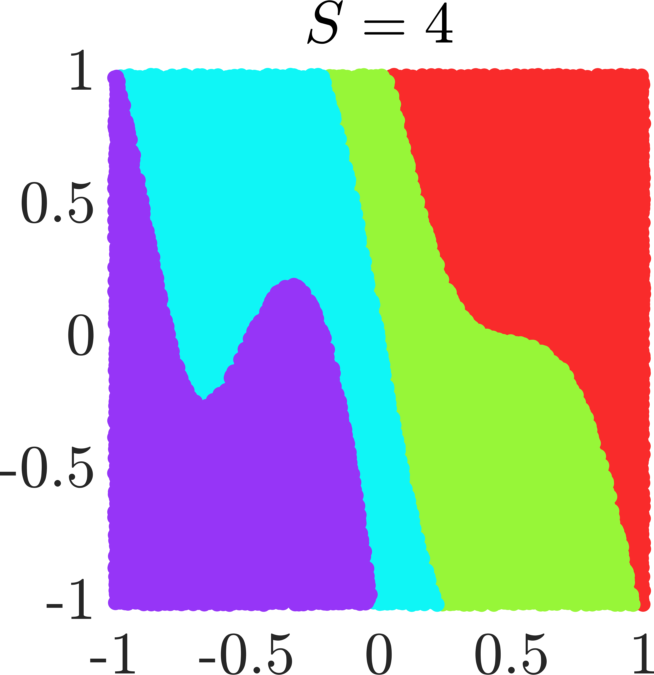}
\includegraphics{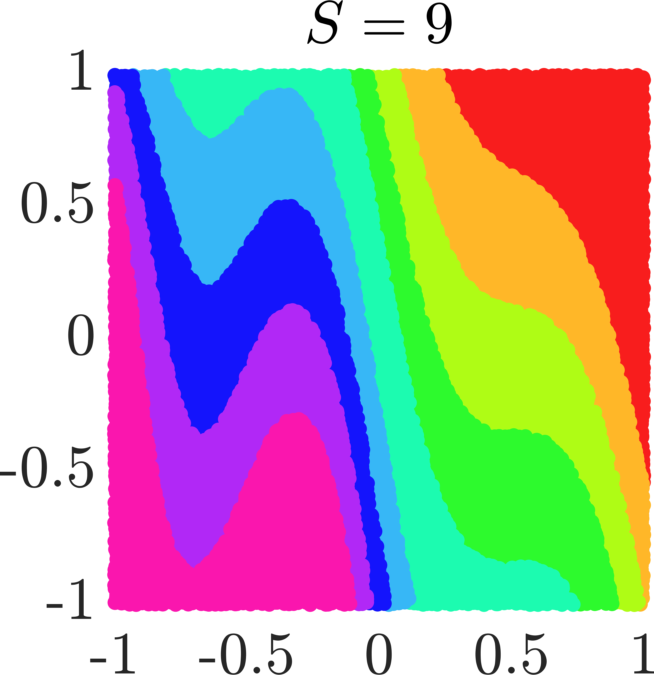}
\includegraphics{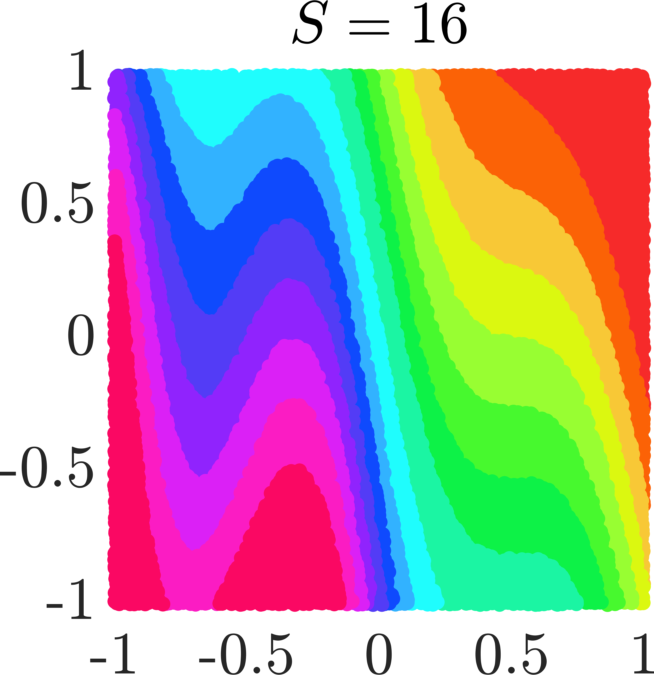}
\includegraphics{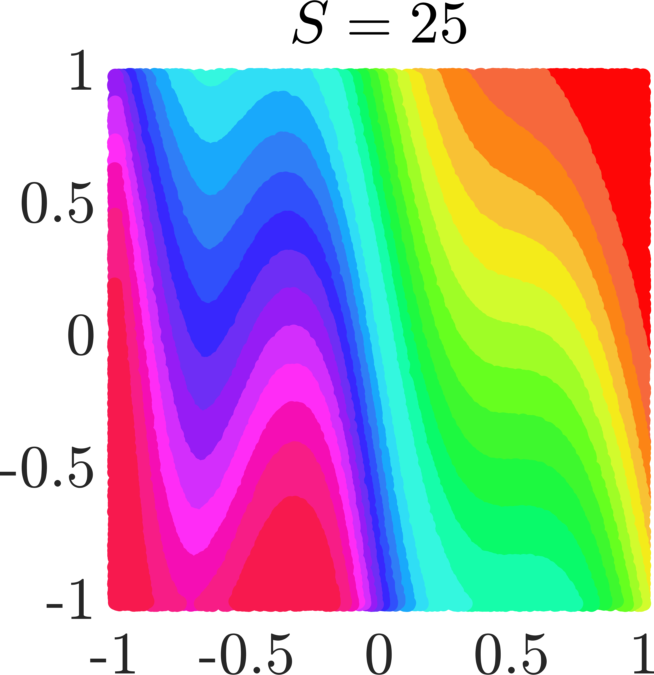}
\end{center}
\vspace{0.25cm}
\begin{center}
\includegraphics[scale=0.99]{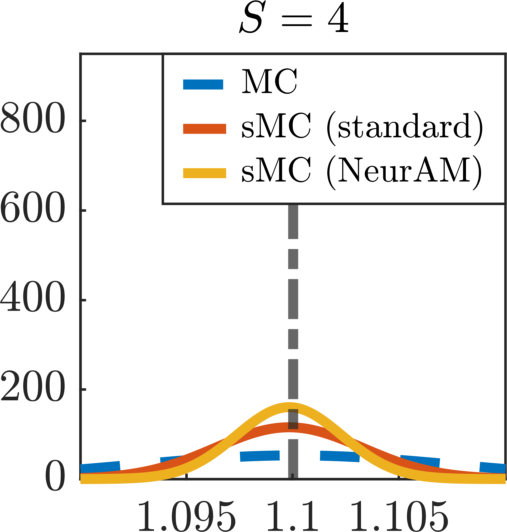}
\includegraphics[scale=0.99]{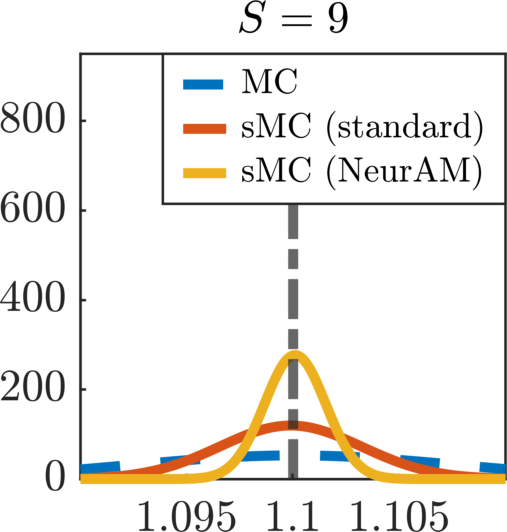}
\includegraphics[scale=0.99]{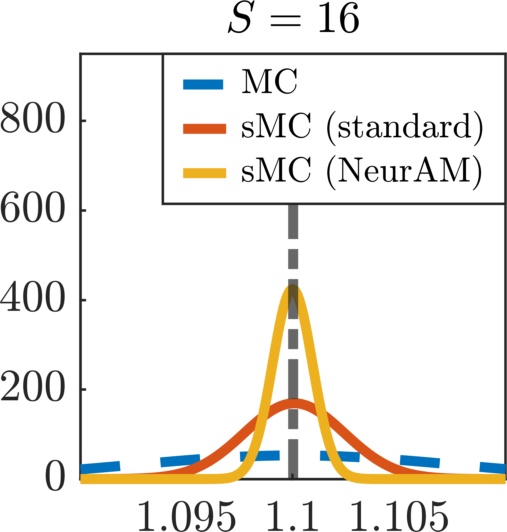}
\includegraphics[scale=0.99]{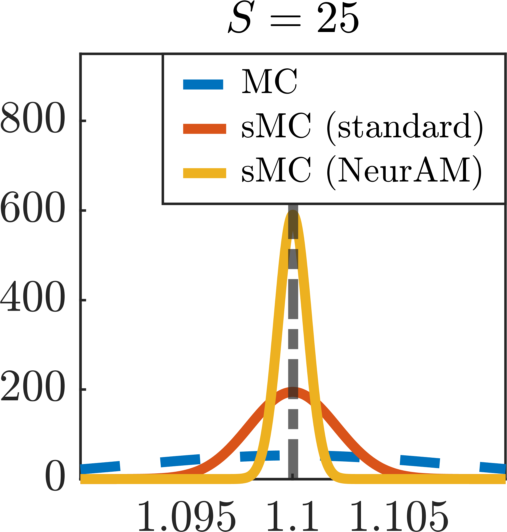}
\end{center}
\vspace{0.25cm}
\begin{center}
\begin{tabular}{c|cccc}
\toprule
sMC & $S = 4$ & $S = 9$ & $S = 16$ & $S = 25$ \\
\midrule
standard & 1.19e-5 (2.09e-01) & 1.11e-5 (1.95e-01) & 5.55e-6 (9.75e-02) & 4.19e-6 (7.36e-02) \\
NeurAM & 6.20e-6 (1.09e-01) & 2.08e-6 (3.66e-02) & 8.84e-7 (1.55e-02) & 4.56e-7 (8.01e-03) \\
\bottomrule
\end{tabular}
\end{center}
\vspace{0.25cm}
\begin{center}
\includegraphics{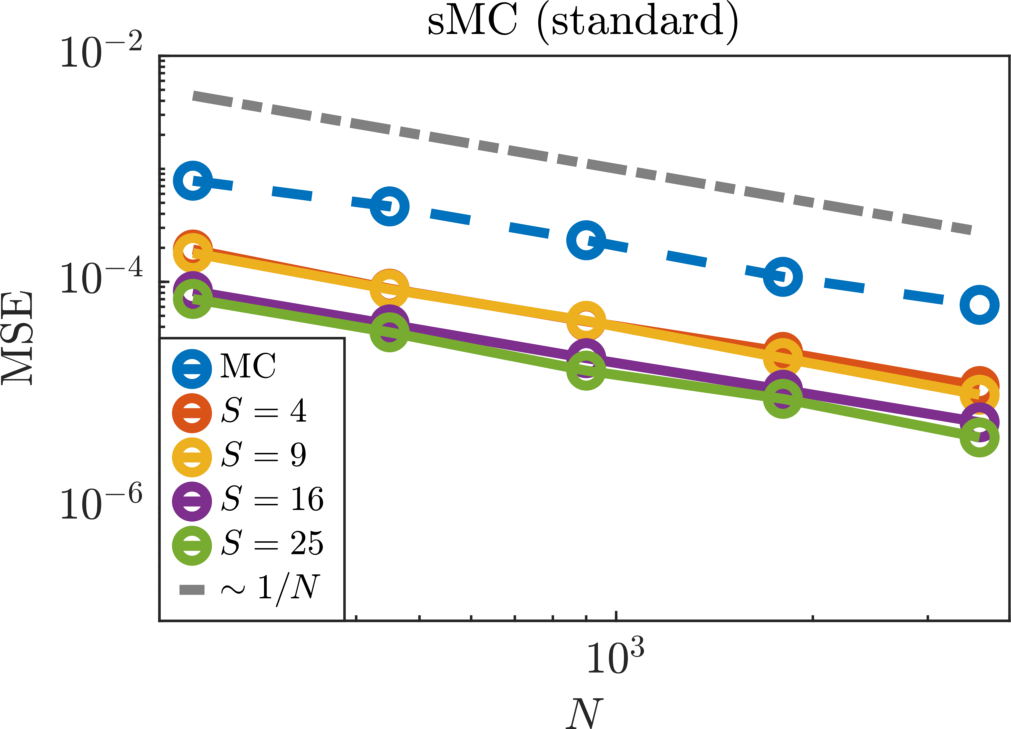} \hspace{0.1cm}
\includegraphics{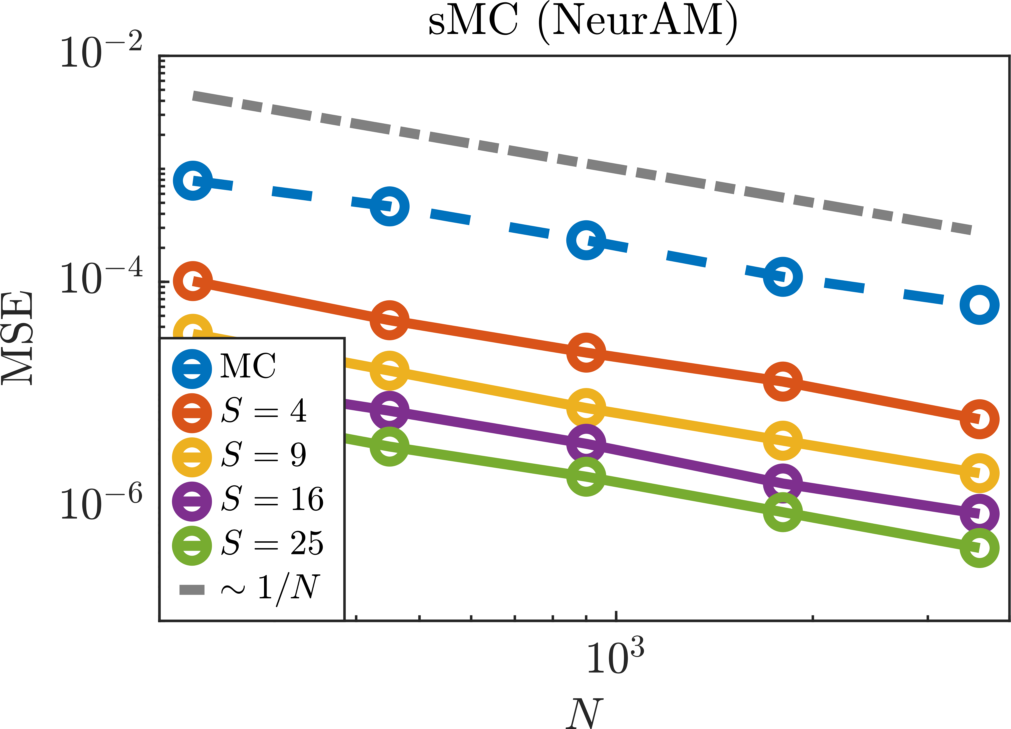}
\end{center}
\caption{\emph{TOP}: NeurAM-based stratification of the domain for the model $\Q_0$, varying the number of strata $S = 4, 9, 16, 25$. \emph{MIDDLE-TOP}: Comparison between standard Monte Carlo estimator $\widehat q_\MC$ (dashed line) and the stratified estimators $\widehat q_\sMC$ (solid line) with both standard and NeurAM-based stratification. The gray dash-dotted vertical line represents the exact value of the quantity of interest. \emph{MIDDLE-BOTTOM}: Mean squared error (MSE) of the estimators to be compared with the value 5.69e-5 for Monte Carlo obtained with $N=3600$. The numbers in parentheses denote the ratio between the MSE of each estimator and the Monte Carlo reference value. \emph{BOTTOM}: MSE as a function of the computational budget $N$.}
\label{fig:dependence_S}
\end{figure}

We consider the model $\Q_0$ in equation \eqref{eq:Q0} to study how the performance of the algorithm is affected by the choice of $M$, $K$ and $S$ under a uniform stratification, like in \cref{ex:uniform} in \cref{sec:NeurAM_stratification}. 
First, we fix the number of strata $S = 2$ and the computational budget $N = 1024$, and we vary the training dataset size $M = 5, 10, 50, 100$, and the number of latent samples $K = 10^3, 10^4, 10^5, 10^6$, respectively. 
Approximate distributions and mean squared errors (MSE) of various estimators are reported in \cref{fig:dependence_KM}, confirming an overall increase in performance with larger training datasets. 
In particular, the variance of the stratified estimator $\widehat q_\sMC$ decreases as $M$ increases, until it stabilizes. 
We remark that the number of samples is not a parameter that can be tuned, but it is usually constrained by the specific application, depending on the available model observations or computational budget. 

One could argue that, after NeurAM training, we could directly replace the model with its surrogate and compute as many evaluations as desired, since the surrogate model has a negligible computational cost. However, even if we can ideally get zero variance, this would lead to a bias in the estimation, as shown by the colored solid vertical lines in the plot. 
Moreover, this bias would increase by reducing the size $M$ of the training dataset.
Despite adopting a biased surrogate model, and even if an optimal variance reduction cannot be achieved due to a limited number of model evaluations, we can still use NeurAM to guide the stratification and achieve a reduction in variance. 
Specifically, this demonstrates that the surrogate model used in the NeurAM training does not need to be highly accurate to identify an effective nonlinear manifold for constructing a stratification that leads to variance reduction.

The hyperparameter $K$ is instead responsible for the bias in the estimation of the quantity of interest, and a smaller $K$ would generally result in a larger bias.
However, this is not a limitation of our approach as $K$ can always be chosen sufficiently large. 
In fact, sampling from the latent space is computationally cheap since, due to \cref{rem:strata_mu}, we can draw samples from $\mu_0$ and then apply the encoder $\E$ and the CDF $\F$, which have negligible computational costs. 
Nevertheless, from the table with the mean squared errors in \cref{fig:dependence_KM}, we notice that increasing $K$ is not always beneficial if $M$ is limited, and this is due to the fact that the one-dimensional manifold is not correctly identified. 
Yet, the improvement with respect to standard Monte Carlo is still evident, since its MSE is 1.93e-4.

We now consider a computational budget $N = 3600$, fix $M = 100$ and $K = 10^6$, and vary the number of strata $S = 4, 9, 16, 25$, while still adopting a uniform stratification.
In \cref{fig:dependence_S} we plot the stratified estimator for the different values of $S$, and compare Cartesian and NeurAM-based stratification.
The variance of the NeurAM-based estimator is significantly smaller for an increasing number of strata, yielding more precise estimates. This behavior can also be seen from the MSE as a function of the computational budget $N = 225, 450, 900, 1800, 3600$.
Moreover, we show the stratification produced by NeurAM for an increasing $S$, and observe how each stratum is bounded by the level sets of $\Q$ in \cref{fig:model}.
This property is crucial allowing NeurAM-based stratified estimators to scale to high-dimensional problems.

\subsection{Performance of the heuristic algorithm} \label{sec:num_heuristic}

\begin{figure}
\begin{center}
\includegraphics{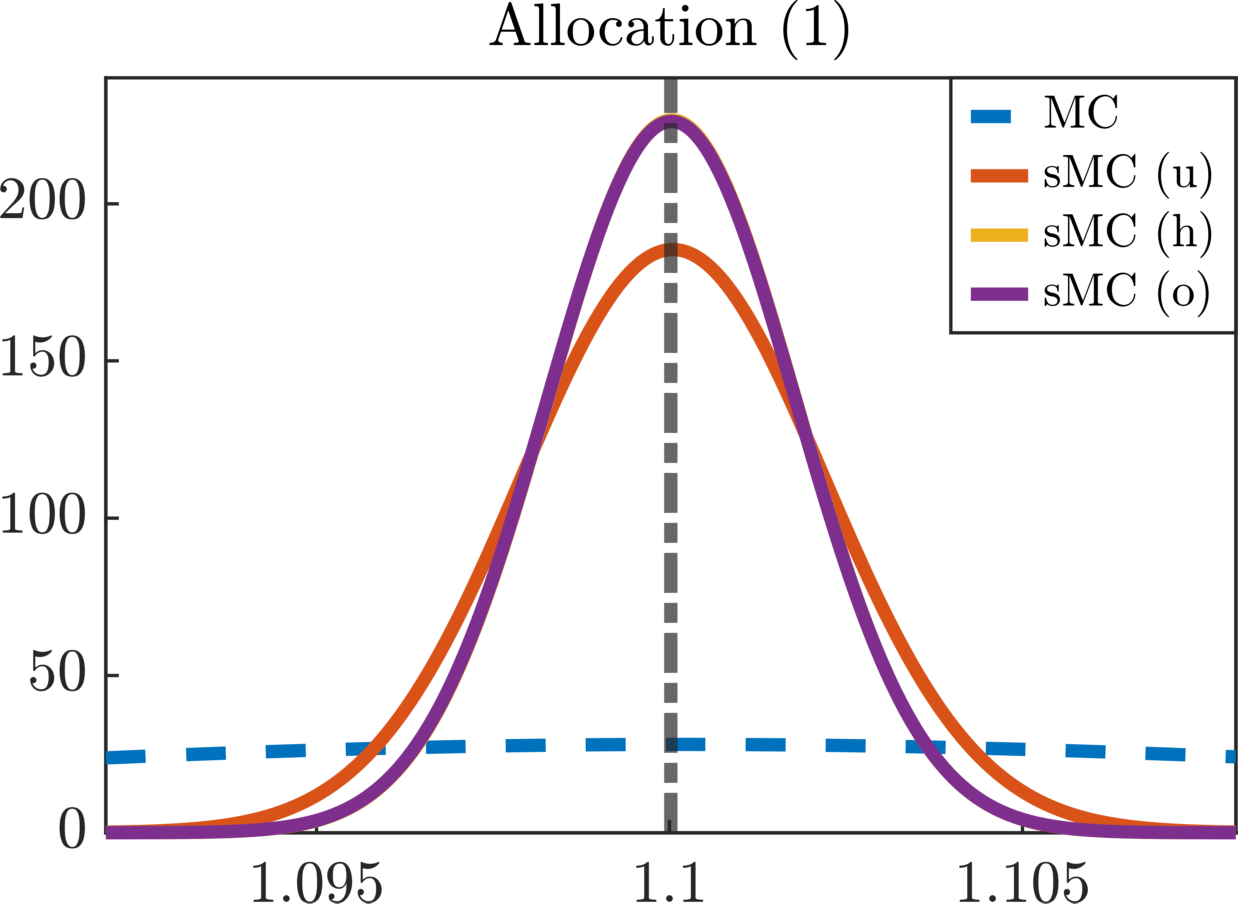} \hspace{1cm}
\includegraphics{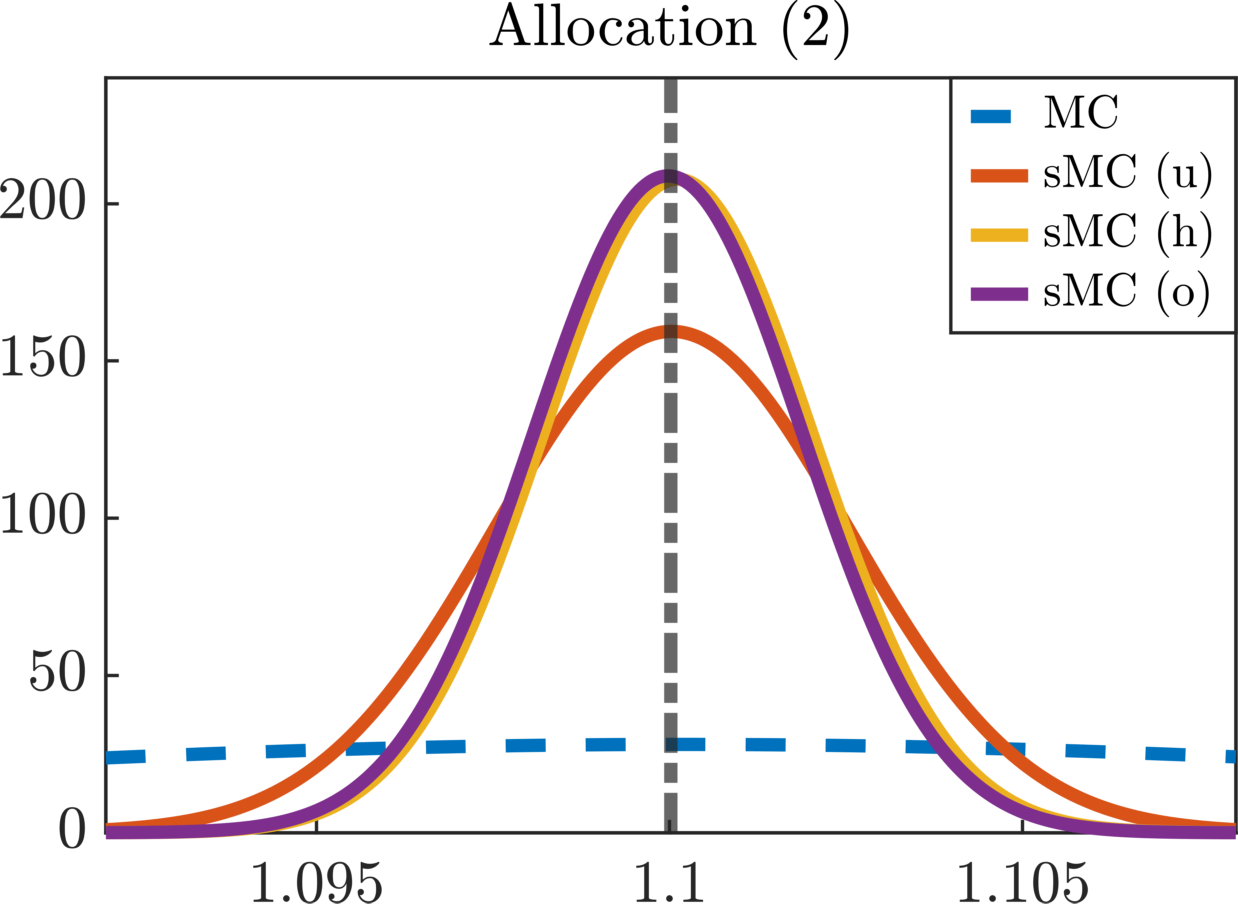}
\end{center}
\vspace{0.25cm}
\begin{center}
\begin{tabular}{c|ccc}
\toprule
& sMC (u) & sMC (h) & sMC(o) \\
\midrule
Allocation (1) & 4.62e-6 (2.29e-02) & 3.10e-6 (1.53e-02) & 3.11e-6 (1.54e-02) \\
Allocation (2) & 6.26e-6 (3.10e-02) & 3.69e-6 (1.83e-02) & 3.65e-6 (1.81e-02) \\
\bottomrule
\end{tabular}
\end{center}
\vspace{0.25cm}
\begin{center}
\includegraphics[scale=0.99]{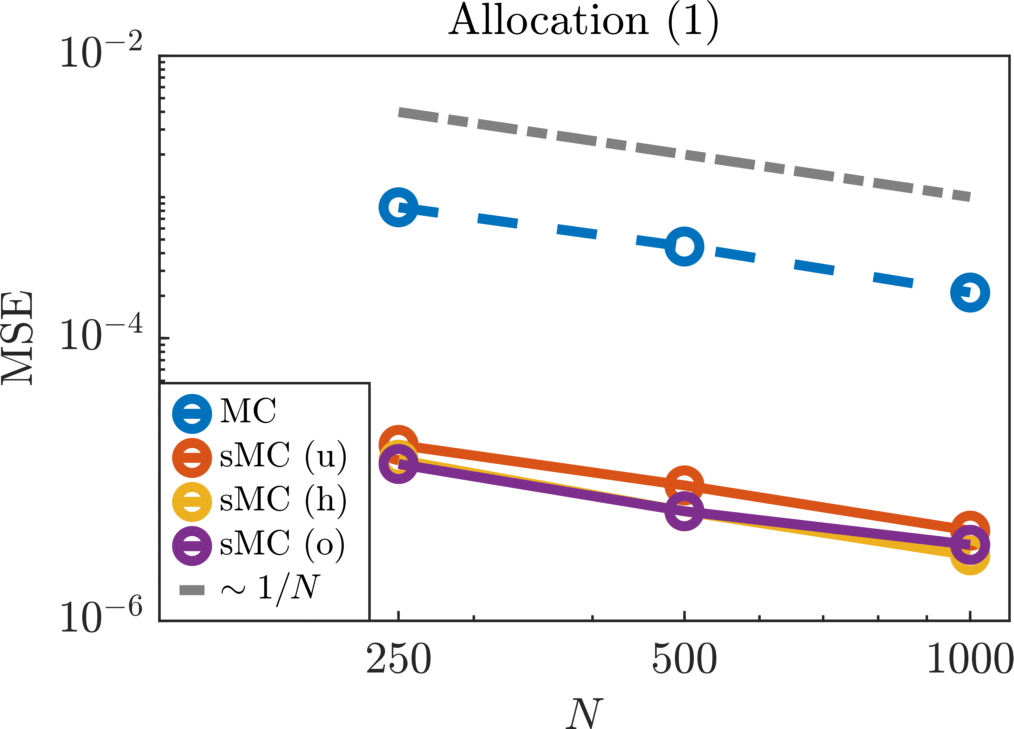} \hspace{0.1cm}
\includegraphics[scale=0.99]{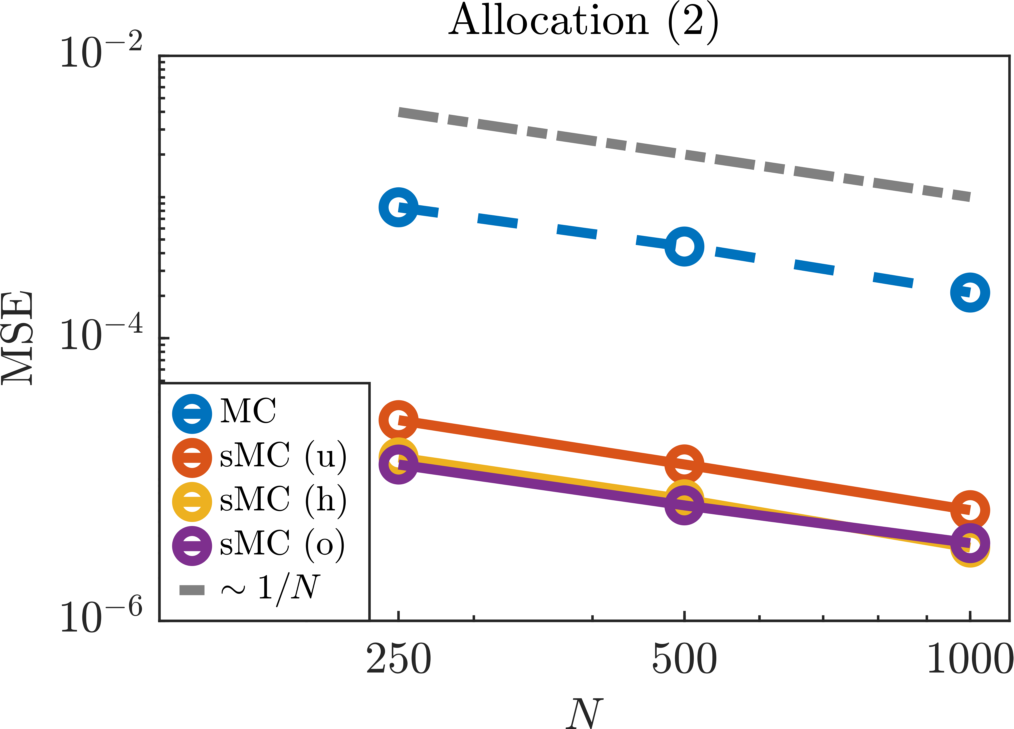}
\end{center}
\caption{\emph{TOP}: Comparison between standard Monte Carlo estimator $\widehat q_\MC$ (dashed line) and the proposed estimator $\widehat q_\sMC$ (solid line), for different allocation strategies and stratification approaches: uniform (u), halved (h), optimal (o). The gray dash-dotted vertical line represents the exact value of the quantity of interest. \emph{MIDDLE}: Mean squared error (MSE) of the estimators to be compared with the value 2.02e-4 for Monte Carlo obtained with $N=1000$. The numbers in parentheses denote the ratio between the MSE of each estimator and the Monte Carlo reference value. \emph{BOTTOM}: MSE as a function of the computational budget $N$.}
\label{fig:heuristic}
\end{figure}

We now investigate possible improvements achieved by the heuristic algorithm introduced in \cref{sec:heuristic}. 
We fix $M = 100$, $K = 10^6$, $S = 10$, assume a computational budget $N = 1000$, and test two allocation strategies, as well as two alternatives to iteratively refine the strata.
The comparison with standard Monte Carlo and Monte Carlo with Cartesian stratification is shown in \cref{fig:heuristic}.
We observe that the heuristic algorithm is always able to outperform uniform stratification, but we do not notice a significant difference between following a bisection approach, i.e., dividing in two equal parts the interval that contributes the most to the final variance, and selecting the optimal new stratum at each step. This behavior can also be seen from the MSE as a function of the computational budget $N = 250, 500, 1000$.
Therefore, in order to limit the additional computational overhead, we suggest to use the former approach. 
We finally remark that, as predicted by theory (see equation \eqref{eq:bound_variances}), the optimal allocation (1) gives a variance which is always smaller than the one produced by a proportional allocation (2).

\subsection{Comparison with active subspaces} \label{sec:num_AS}

\begin{figure}
\begin{center}
\setlength{\tabcolsep}{0.1cm}
\begin{tabular}{cc}
\begin{tabular}{cc}
\includegraphics{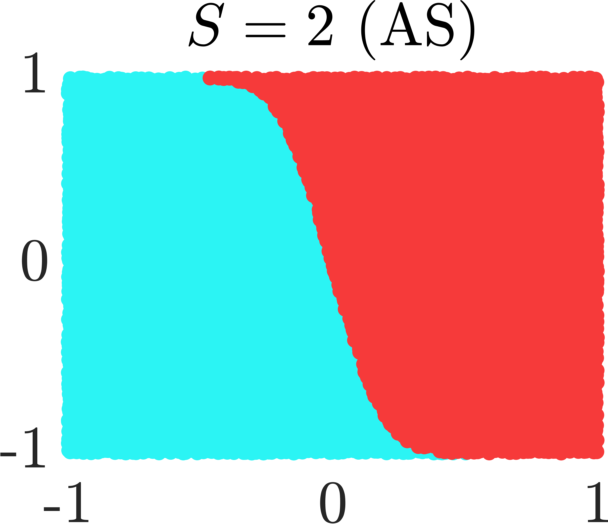} &
\includegraphics{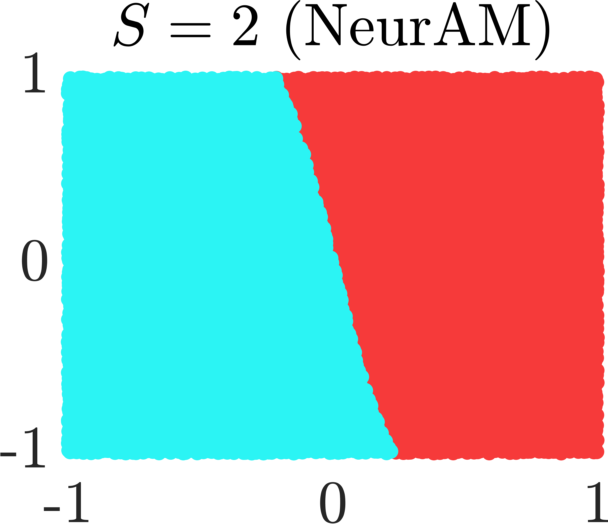}
\end{tabular} &
\begin{tabular}{cc}
\includegraphics{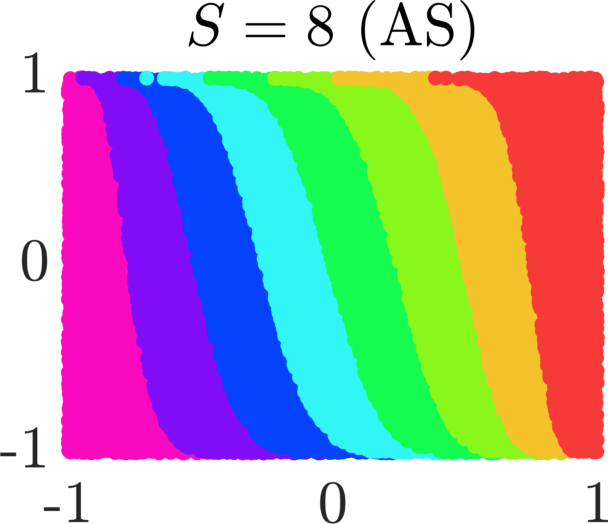} &
\includegraphics{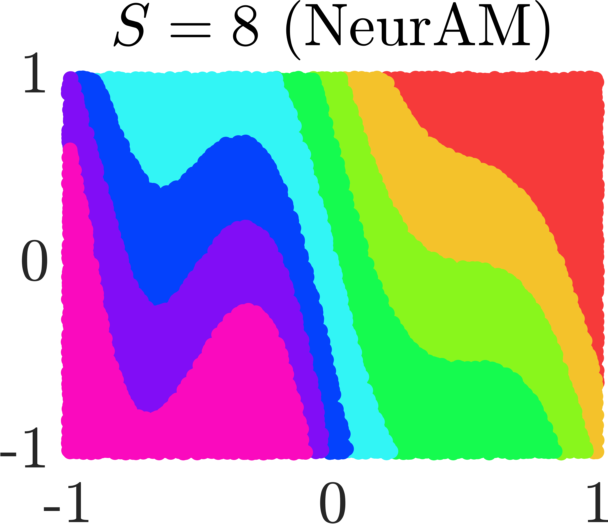}
\end{tabular} \\
\includegraphics{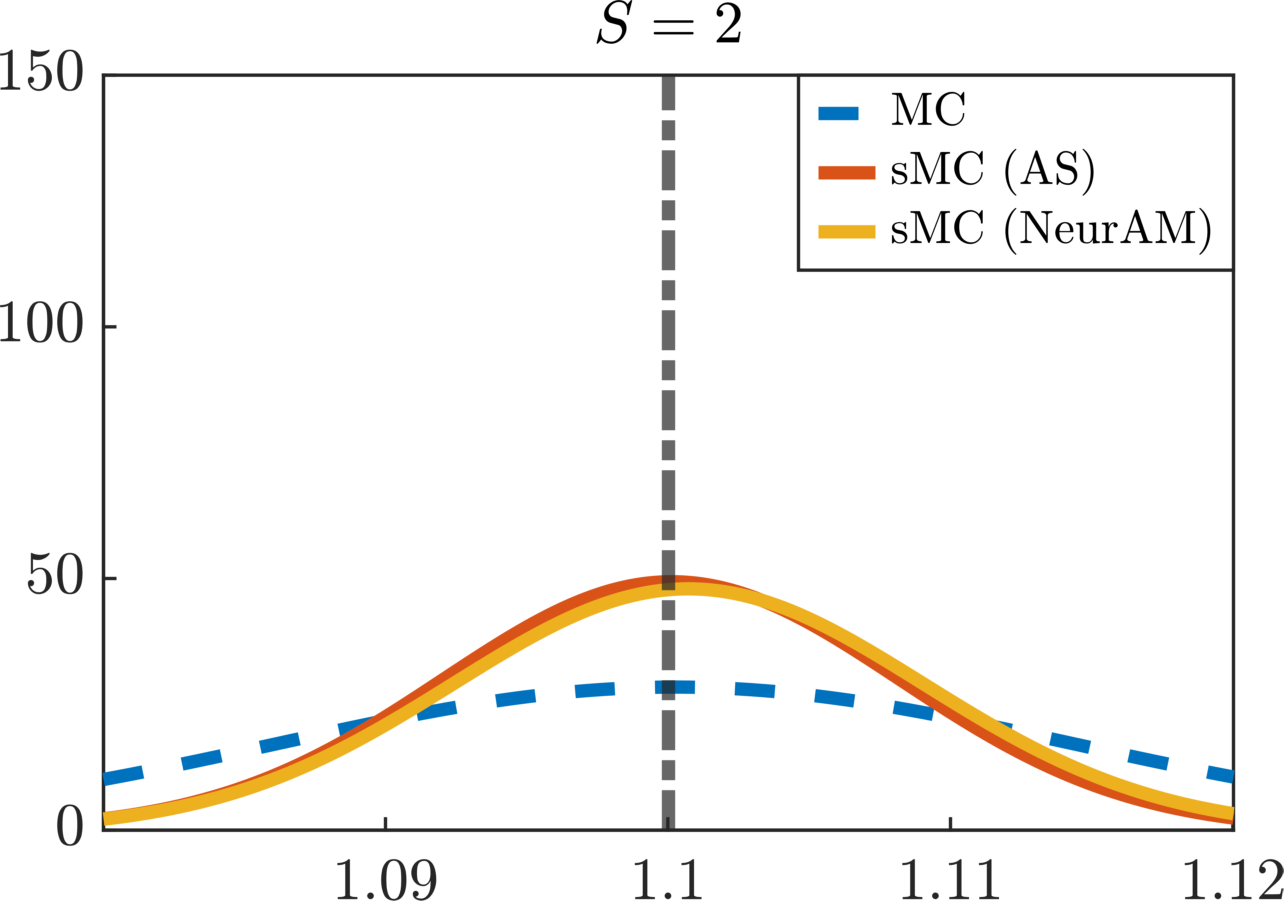} &
\includegraphics{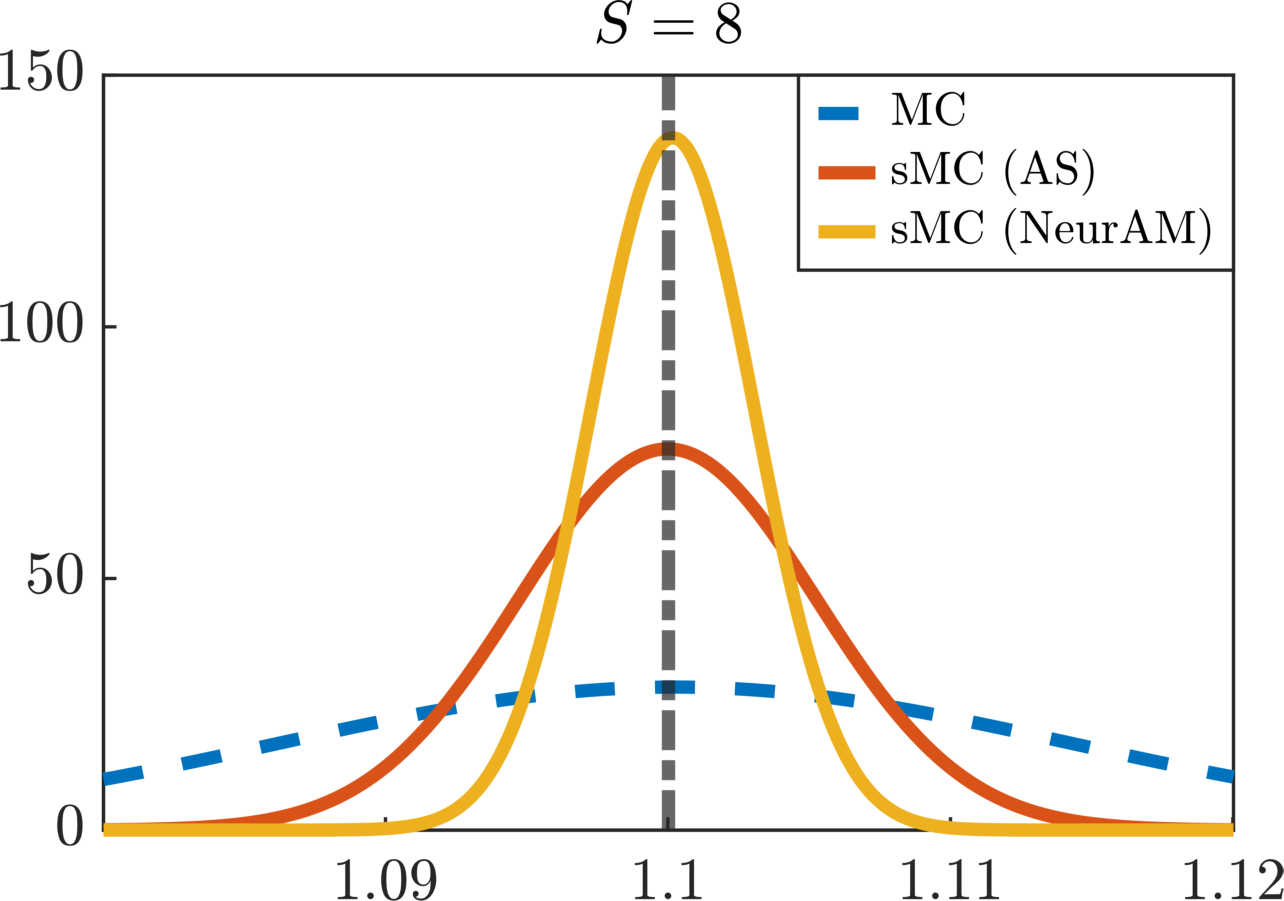}
\end{tabular}
\end{center}
\vspace{0.25cm}
\begin{center}
\begin{tabular}{c|cc}
\toprule
& $S = 2$ & $S = 8$ \\
\midrule
sMC (AS) & 6.53e-5 (3.31e-01) & 2.77e-5 (1.41e-01) \\
sMC (NeurAM) & 6.97e-5 (3.54e-01) & 8.42e-6 (4.27e-02) \\
\bottomrule
\end{tabular}
\end{center}
\vspace{0.25cm}
\begin{center}
\includegraphics{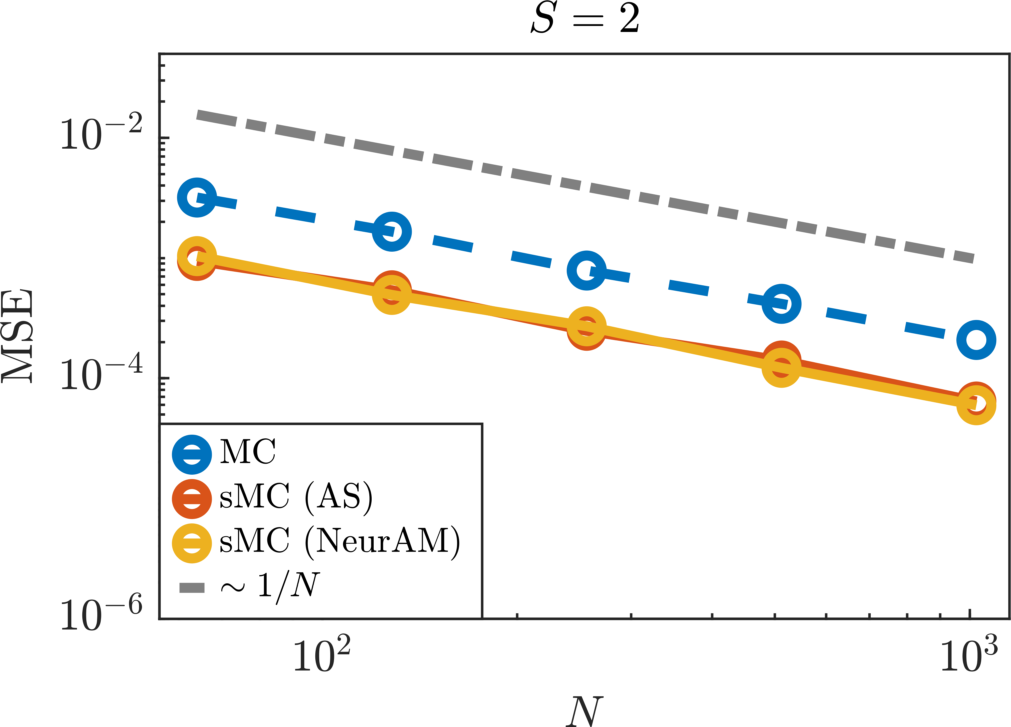} \hspace{0.1cm}
\includegraphics{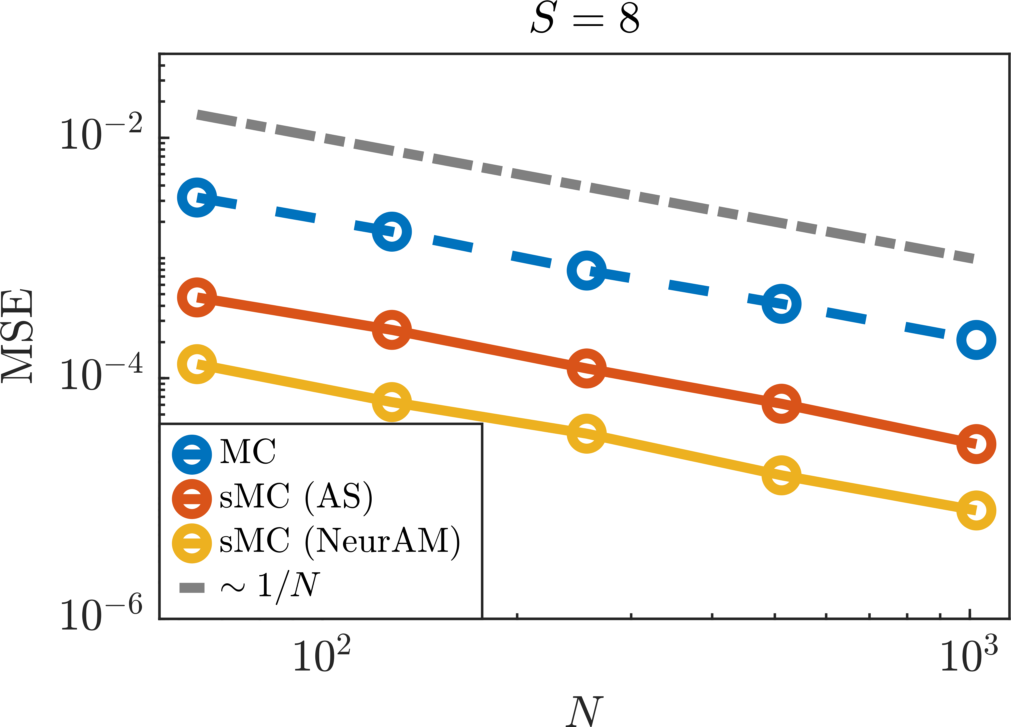}
\end{center}
\caption{\emph{TOP}: AS and NeurAM-based stratifications of the domain for the model $\Q_0$, varying the number of strata $S = 2, 8$. \emph{MIDDLE-TOP}: Comparison between standard Monte Carlo estimator $\widehat q_\MC$ (dashed line) and stratified Monte Carlo estimator $\widehat q_\sMC$ (solid line) with both AS and NeurAM-based stratification, varying the number of strata $S = 2$ (left) and $S = 8$ (right). The gray dash-dotted vertical line represents the exact value of the quantity of interest. \emph{MIDDLE-BOTTOM}: Mean squared error (MSE) of the estimators to be compared with the value 1.97e-4 for Monte Carlo obtained with $N=1024$. The numbers in parentheses denote the ratio between the MSE of each estimator and the Monte Carlo reference value. \emph{BOTTOM}: MSE as a function of the computational budget $N$.}
\label{fig:AS}
\end{figure}

NeurAM has the advantages of being nonlinear and entirely data-driven, requiring no knowledge of the model gradient, but it is not the only available option to improve stratified sampling. In this section, we demonstrate that stratified sampling can be combined with other dimensionality reduction techniques. However, we also show that NeurAM outperforms linear state-of-the-art approaches such as the active subspace method \cite{CDW14}. Inspired by \cite{ZGS24b}, we build a map from the original domain to the unit interval as follows. Let $\mathcal G \colon \R^d \to \R^d$ be a transformation that maps a standard Gaussian $\mathcal N(0,I)$ into the input distribution $\mu$, i.e. $\mathcal G_\# \mathcal N(0,I) = \mu$, and define the reparameterized model
\begin{equation}
\widetilde\Q(z) = \Q(\mathcal G(z)).
\end{equation}
For this test case, since $\mu_0 = \mathcal U([-1,1]^2)$, we have
\begin{equation}
\mathcal G_0(z) = \erf \left( \frac{z}{\sqrt 2} \right),
\end{equation}
where the function $\erf$ is evaluated componentwise, otherwise the map can be computed using, e.g., normalizing flows \cite{KPB21}. Then, let $B$ be the matrix
\begin{equation}
B = \Ex^{\mathcal N(0,I)} \left[ \nabla \widetilde\Q(Z) \nabla \widetilde\Q(Z)^\top \right],
\end{equation}
which is symmetric positive semidefinite and therefore has positive eigenvalues. We denote by $v$ the eigenvector corresponding to the largest eigenvalue, representing the one-dimensional active subspace, and normalize it to have unit norm. We note that the map $\mathcal G$ is important because Gaussian distributions are preserved under linear transformations and, in particular, if $z \sim \mathcal N(0,I)$ then $v^\top z \sim \mathcal N(0,1)$. Therefore, a point $x \in \mathbb D$ can be mapped into the unit interval $[0,1]$ through the one-dimensional AS, by applying the inverse CDF of the standard Gaussian distribution to $v^\top \mathcal G^{-1}(x)$. Specifically, using the notation of \cref{sec:NeurAM_stratification}, equation \eqref{eq:Ds_def} now reads
\begin{equation} \label{eq:Ds_def_AS}
D_s^{\mathrm{AS}} = \left\{ x \in \mathbb D \colon \frac12 \left( \erf \left( \frac{v^\top \mathcal G^{-1}(x)}{\sqrt 2} \right) + 1 \right) \in A_s \right\}.
\end{equation}
In \cref{fig:AS} we compare the stratification provided by AS with the NeurAM-based stratification for a uniform division of the unit interval into $S = 2$ and $S = 8$ strata, and a computational budget of $N = 1024$. The distribution of the corresponding stratified Monte Carlo estimators with proportional allocation is also shown and compared to that of standard Monte Carlo, which is consistently outperformed. This behavior can also be seen from the MSE as a function of the computational budget $N = 64, 128, 256, 512, 1024$. While the results are comparable for $S = 2$, we observe that the estimator based on nonlinear dimensionality reduction achieves significantly greater variance reduction than linear techniques when the number of strata increases, e.g., $S = 8$. This improvement is primarily due to the greater expressiveness of nonlinear methods, which are better able to follow the contour lines of nonlinear models.

\subsection{Stratified multifidelity estimators} \label{sec:num_multifidelity}

\begin{figure}
\begin{center}
\includegraphics{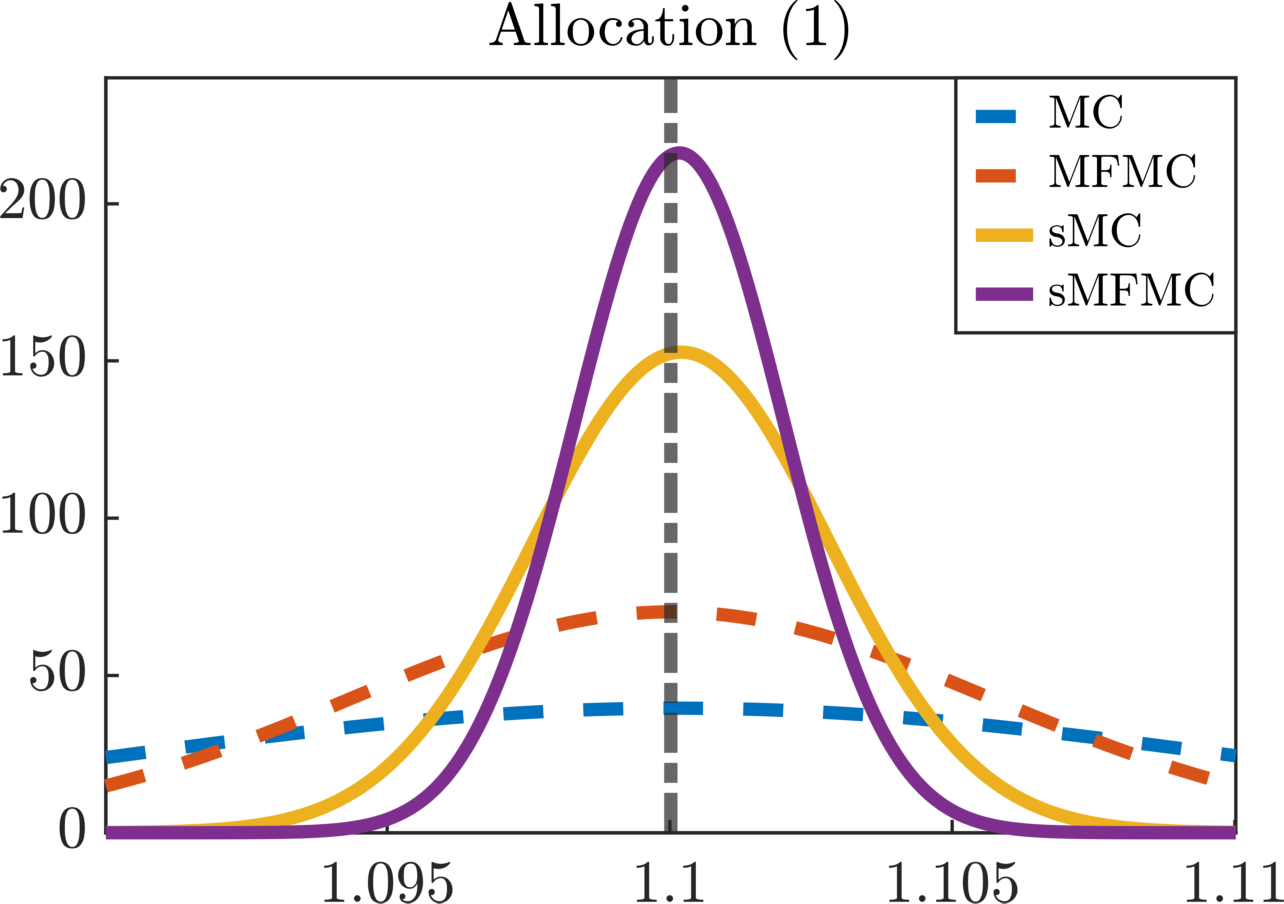} \hspace{0.5cm}
\includegraphics{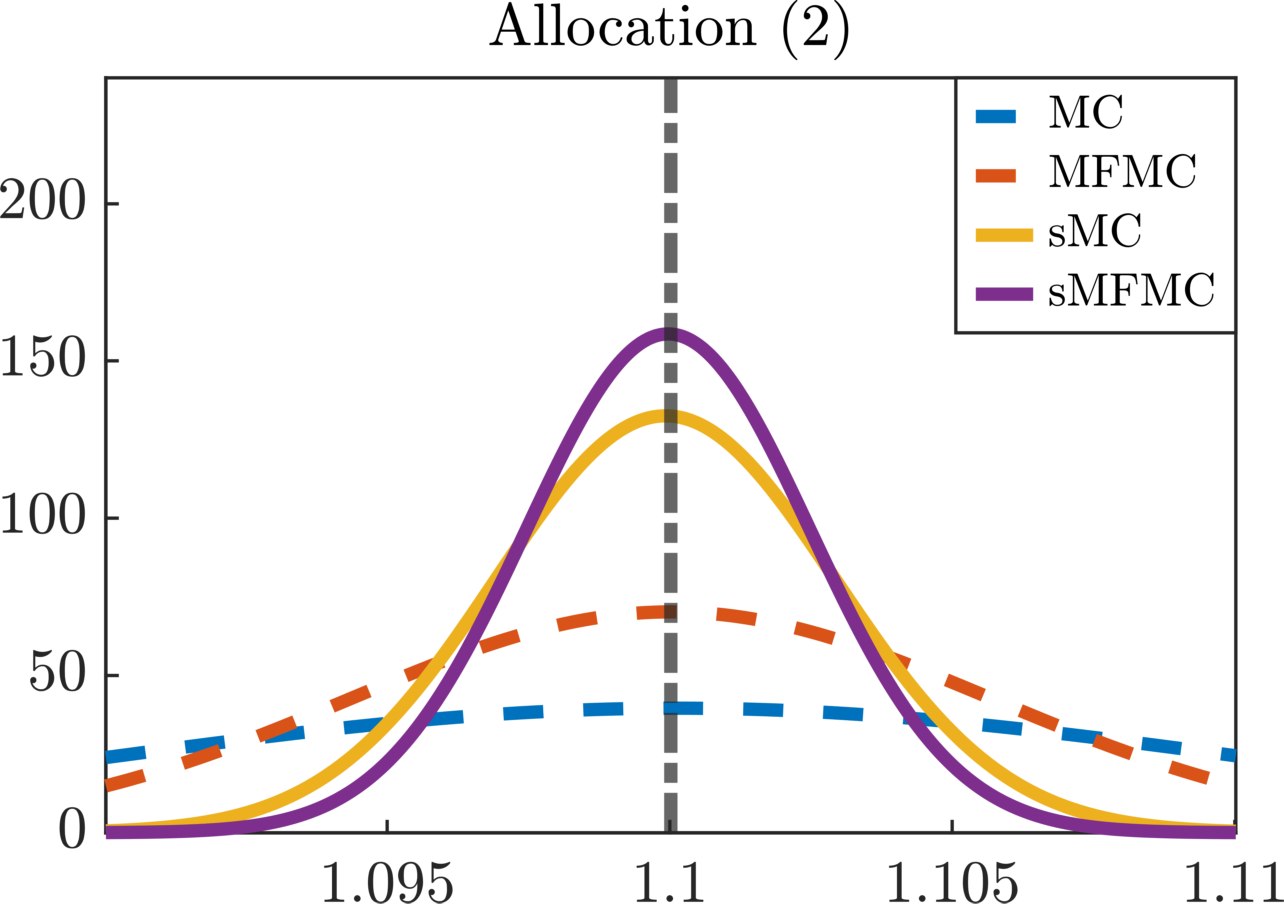}
\end{center}
\vspace{0.25cm}
\vspace{0.25cm}
\begin{center}
\begin{tabular}{c|cc}
\toprule
& sMC & sMFMC \\
\midrule
Allocation (1) & 6.84e-6 (6.77e-02) & 3.43e-6 (3.40e-02) \\
Allocation (2) & 9.06e-6 (8.97e-02) & 6.32e-6 (6.26e-02) \\
\bottomrule
\end{tabular}
\end{center}
\vspace{0.25cm}
\begin{center}
\includegraphics{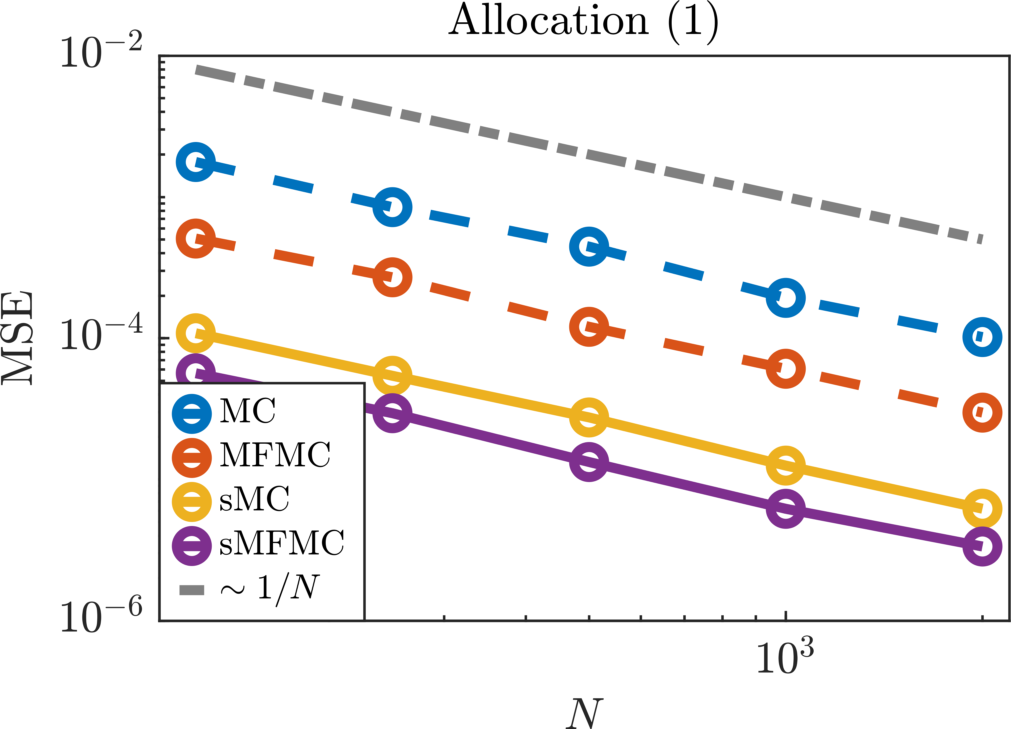} \hspace{0.1cm}
\includegraphics{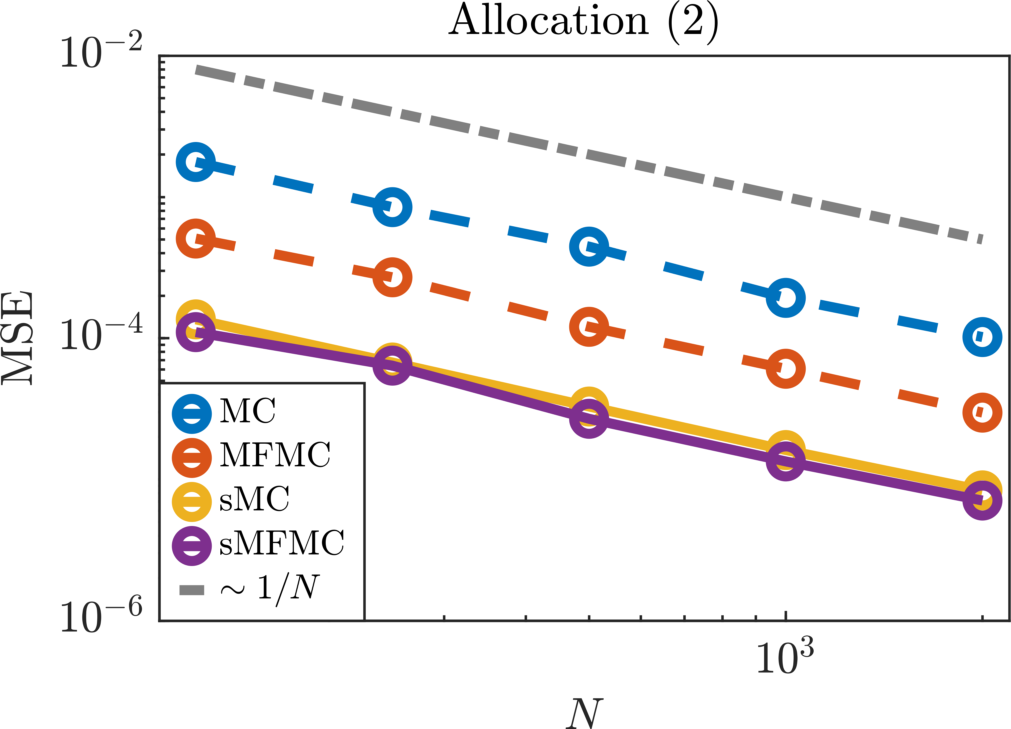}
\end{center}
\caption{\emph{TOP}: Comparison between standard (multifidelity) Monte Carlo estimators $\widehat q_\MC, \widehat q_\MFMC$ (dashed line) and the proposed estimators $\widehat q_\sMC, \widehat q_\sMFMC$ (solid line), for different allocation strategies. The gray dash-dotted vertical line represents the exact value of the quantity of interest. \emph{MIDDLE}: Mean squared error (MSE) of the estimators to be compared with the value 1.01e-4 for Monte Carlo. The numbers in parentheses denote the ratio between the MSE of each estimator and the Monte Carlo reference value. Standard multifidelity Monte Carlo achieves an MSE of 3.22e-5 (3.19e-01). \emph{BOTTOM}: MSE as a function of the computational budget $N$.}
\label{fig:multifidelity}
\end{figure}

In this section, we test the performance of estimators combining NeurAM stratification and multifidelity variance reduction, as discussed in \cref{sec:multifidelity}.
Let $\Q^\HF = \Q_0$, and consider a low-fidelity model of the form
\begin{equation}
Q^\LF(x) = e^{0.01x_1 + 0.99x_2} + 0.15\sin(3\pi x_2),
\end{equation}
for which we assume a cost ratio $w = 0.01$, to reflect cost differences in realistic applications.
This poorly correlated low-fidelity model has already been used in \cite{GEG18,ZGS24a,ZGS24b,ZGS25}. 
Following \cite[Section 3.2]{ZGS25}, we then train NeurAM for both models using $M = 100$, and re-parameterize the low-fidelity model using
\begin{equation}
\Q^\LF(x) = Q^\LF(\D^\LF((\F^\LF)^{-1}(\F^\HF(\E^\HF(x))))),
\end{equation} 
resulting in higher correlations with respect to the high-fidelity model. 
In \cref{fig:multifidelity} we compare (stratified) Monte Carlo estimators with (stratified) multifidelity Monte Carlo estimators for both the allocation strategies, assuming a computational budget $N = 2,000$ and using a uniform stratification with $S = 5$ strata. 
First, we observe that stratification is always beneficial, and stratified estimators outperform multifidelity estimators. This behavior can also be seen from the MSE as a function of the computational budget $N = 125, 250, 500, 1000, 2000$.
Then, we notice that leveraging stratification for multifidelity estimators allows us to obtain a significant variance reduction. 
Finally, we remark that, when using the optimal allocation strategy (1), the improvement with respect to single-fidelity estimators is greater. 
This is in agreement with \cref{rem:variance_MF,pro:variance_MF}, where a weaker sufficient condition is provided for allocation (1) to achieve variance reduction.

\subsection{Higher-dimensional models} \label{sec:num_highDimensional}

\begin{figure}
\begin{center}
\includegraphics{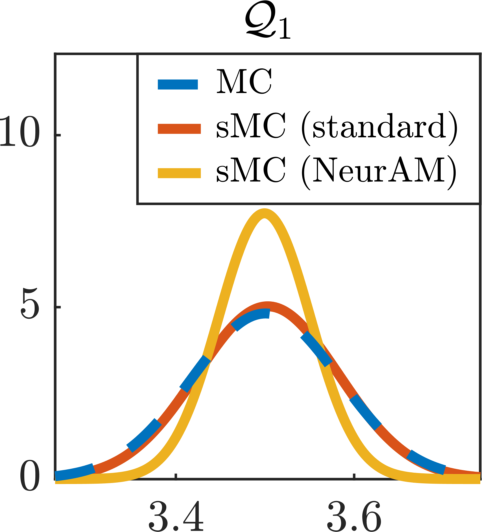}
\includegraphics{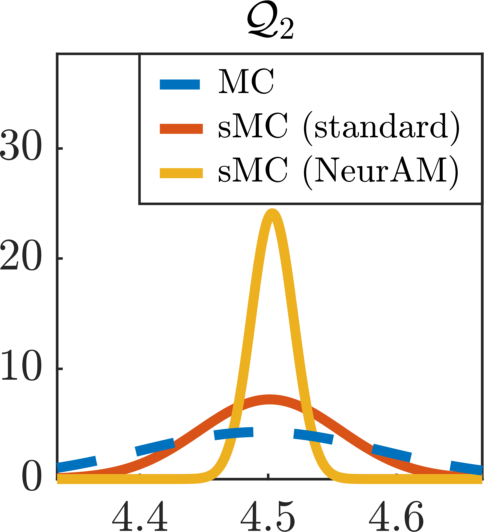}
\includegraphics{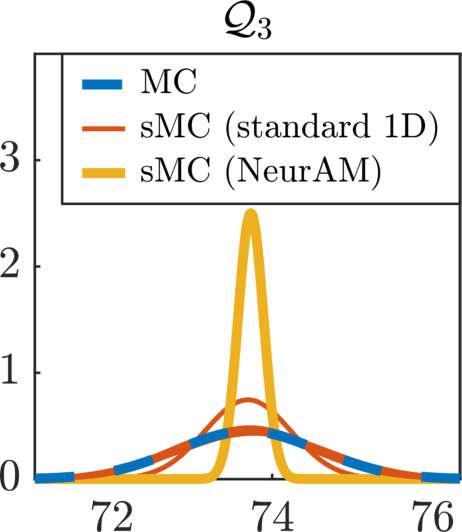}
\includegraphics{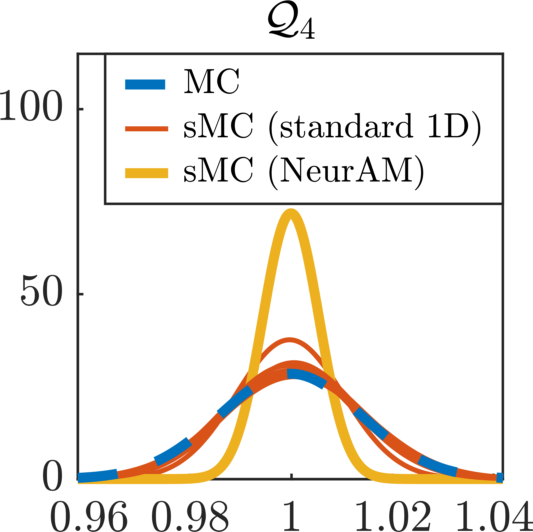}
\end{center}
\vspace{0.25cm}
\begin{center}
\begin{tabular}{c|ccc}
\toprule
& MC & sMC (standard -- best) & sMC (NeurAM)  \\
\midrule
$\Q_1$ & 6.48e-03 (1) & 6.31e-03 (9.74e-01) & 2.36e-03 (3.65e-01) \\
$\Q_2$ & 8.22e-03 (1) & 2.91e-03 (3.54e-01) & 8.96e-05 (1.09e-02) \\
$\Q_3$ & 7.81e-01 (1) & 2.87e-01 (3.67e-01) & 2.55e-02 (3.26e-02) \\
$\Q_4$ & 1.96e-04 (1) & 1.12e-04 (5.73e-01) & 3.08e-05 (1.57e-01) \\
\bottomrule
\end{tabular}
\end{center}
\vspace{0.25cm}
\begin{center}
\includegraphics[scale=0.96]{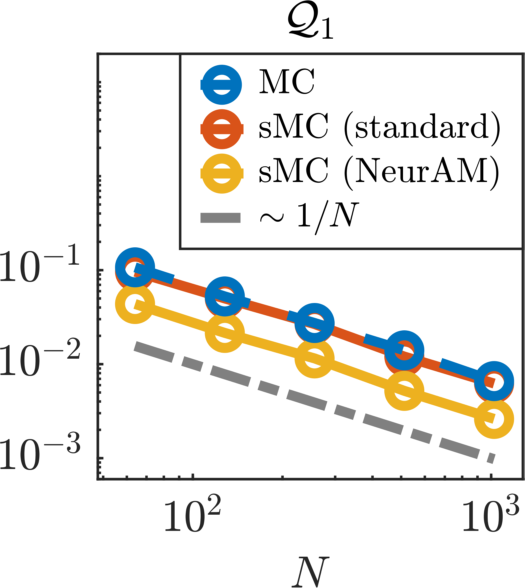}
\includegraphics[scale=0.96]{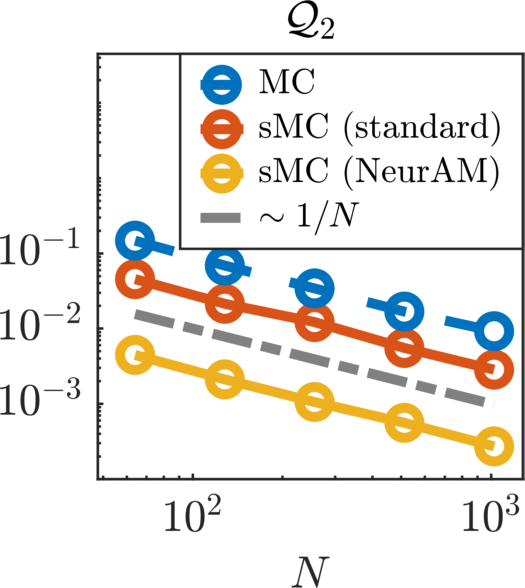}
\includegraphics[scale=0.96]{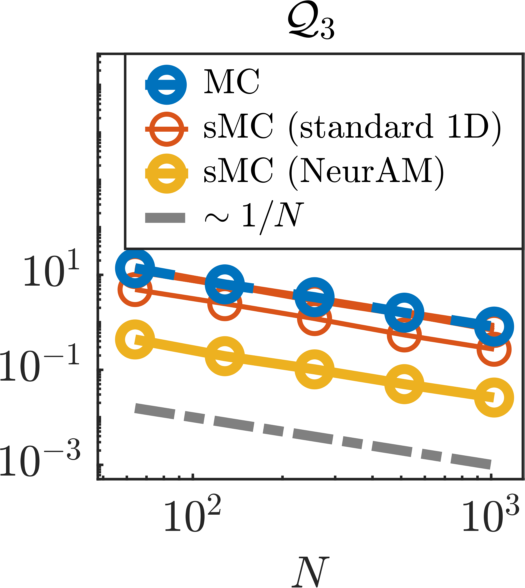}
\includegraphics[scale=0.96]{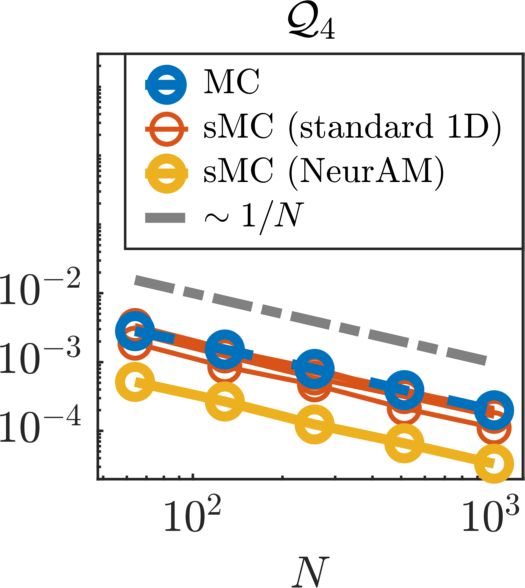}
\end{center}
\caption{\emph{TOP:} Comparison between standard Monte Carlo estimator $\widehat q_\MC$ (dashed line) and the stratified estimators $\widehat q_\sMC$ (solid line) with both standard -- either a regular Cartesian grid ($\Q_1$ and $\Q_2$) or uniform stratification along one dimension ($\Q_3$ and $\Q_4$) -- and NeurAM-based stratification, for the four models in \cref{sec:num_highDimensional}. \emph{MIDDLE}: Variance of the estimators obtained with $N=2000$. For the uniform stratification along one dimension of the models $\Q_3$ and $\Q_4$, the best value is considered. The numbers in parentheses denote the ratio between the variance of each estimator and the Monte Carlo reference value. \emph{BOTTOM}: Variance as a function of the computational budget $N$.}
\label{fig:highDimensional}
\end{figure}

After investigating the convergence of the proposed approach and its sensitivity to hyperparameters, we now focus on how these properties scale to higher dimensions.
Consider the following models:
\begin{itemize}[leftmargin=*] 
\item $\Q_1 \colon \R^3 \to \R$ proposed in \cite{IsH90} which exhibits strong nonlinearity and nonmonotonicity
\begin{equation}
\Q_1(x) = \sin(\pi x_1) + 7\sin(\pi x_2)^2 + 0.1 \pi x_3^4 \sin(\pi x_1);
\end{equation}
\item $\Q_2 \colon \R^4 \to \R$ employed in \cite{GCS17} which models the average velocity of a steady, incompressible, and laminar flow of an electrically conducting fluid between two infinite parallel plates in the presence of a uniform magnetic field (see Hartmann problem, e.g. \cite{SPC16})
\begin{equation}
\Q_2(x) = - \frac{x_2 x_3}{x_4^2} \left( 1 - \frac{x_4}{\sqrt{x_3 x_1}} \coth \left( \frac{x_4}{\sqrt{x_3 x_1}} \right) \right);
\end{equation}
\item $\Q_3 \colon \R^8 \to \R$ proposed in \cite{HaG83} as a model for the flow of water through a borehole
\begin{equation}
\Q_3(x) = \frac{2\pi x_3 (x_4 - x_6)}{\log \left( \frac{x_2}{x_1} \right) \left(1 + \frac{x_3}{x_5} + \frac{2 x_7 x_3}{\log \left( \frac{x_2}{x_1} \right) x_1^2 x_8} \right)};
\end{equation}
\item $\Q_4 \colon \R^{10} \to \R$ which is a modification of the so-called $g$-function
\begin{equation}
\Q_4(x) = \prod_{i=1}^{10} \frac{2\abs{x_i} + i}{1 + i}.
\end{equation}
\end{itemize}
Moreover, let the corresponding input probability distributions be:
\begin{itemize}[leftmargin=*]
\item $\mu_1 = \mathcal U([-1,1]^3)$;
\item $\mu_2 = \log \mathcal U([0.05, 0.2]) \otimes \log \mathcal U([0.5, 3]) \otimes \log \mathcal U([0.5, 3]) \otimes \log \mathcal U([0.1, 1])$;
\item $\mu_3$ given by the following product measure
\begin{equation}
\begin{aligned}
\mu_3 &= \mathcal N(0.10, 0.0161812^2) \otimes \log \mathcal N(7.71, 1.0056^2) \otimes \mathcal U([63070, 115600]) \otimes \mathcal U([990, 1110]) \\
&\quad \otimes \mathcal U([63.1, 116]) \otimes \mathcal U([700, 820]) \otimes \mathcal U([1120, 1680]) \otimes \mathcal U([9855, 12045]);
\end{aligned}
\end{equation}
\item $\mu_4 = \mathcal U([-1,1]^{10})$,
\end{itemize}
where $\log \mathcal U$ and $\log \mathcal N$ denote the log-uniform and log-normal distributions, respectively. 
In \cref{fig:highDimensional} we compare standard and stratified Monte Carlo estimators for all the models above, assuming a computational budget $N = 1024$ and training NeurAM using a dataset of size $M = 1024$. 
The uniform NeurAM-based stratification is computed using $S = 16$ strata, except for $\Q_1$ for which we use $S = 8$. 
In the first two test cases, characterized by a relatively low input dimensionality, we also plot the estimator based on a Cartesian stratification on a regular multidimensional grid. 
Note that a Cartesian grid is not sustainable in high dimensions. In fact, even choosing two strata per dimension, in, e.g., 10 dimensions, this would lead to $2^{10} = 1024$ strata in total, saturating the computational budget.
Therefore, for the last two test cases, in Figure~\ref{fig:highDimensional} we also plot stratified Monte Carlo estimators obtained by partitioning each dimension uniformly, one at a time. We note that the performance in these cases is poor, with results being roughly equivalent across all dimensions.
We observe that our methodology outperforms standard stratification and provides effective variance reduction even for high dimensional problems. This behavior can also be seen from the plot of the variance as a function of the computational budget $N = 64, 128, 256, 512, 1024$.
Regarding the model $\Q_4$, we finally remark that the NeurAM surrogate is inaccurate, giving an approximation error of $\sim 23 \%$. Nevertheless, the variance reduction is significant, confirming that a highly accurate surrogate model is not essential to improve the estimation even in high dimensions.

\subsection{Comparison with other variance reduction techniques} \label{sec:num_other}

\begin{figure}
\begin{center}
\begin{tabular}{ccc}
\includegraphics{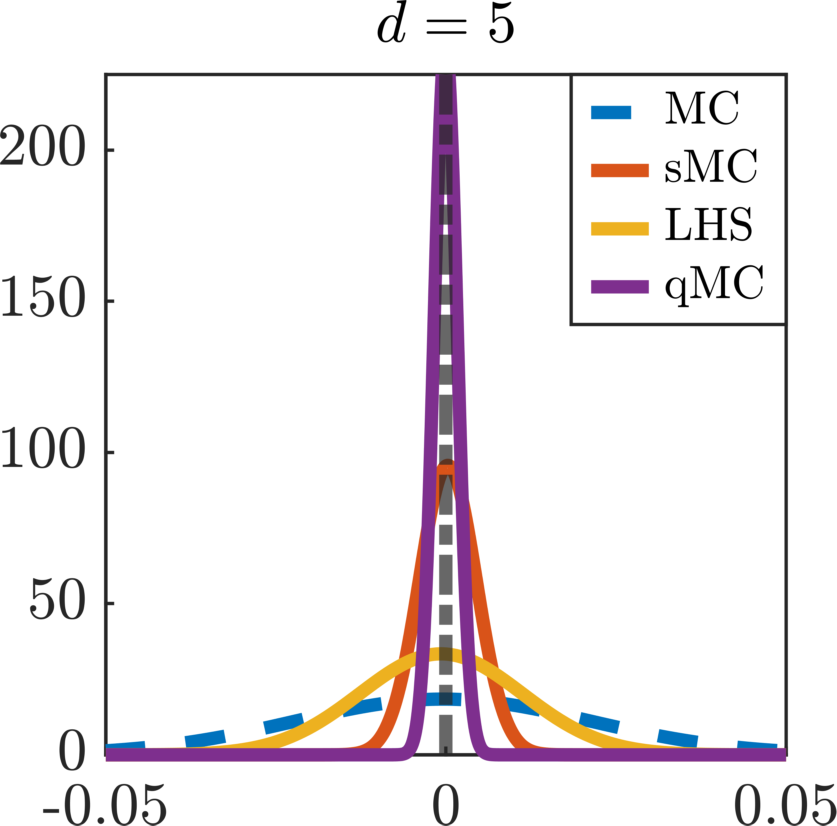} &
\includegraphics{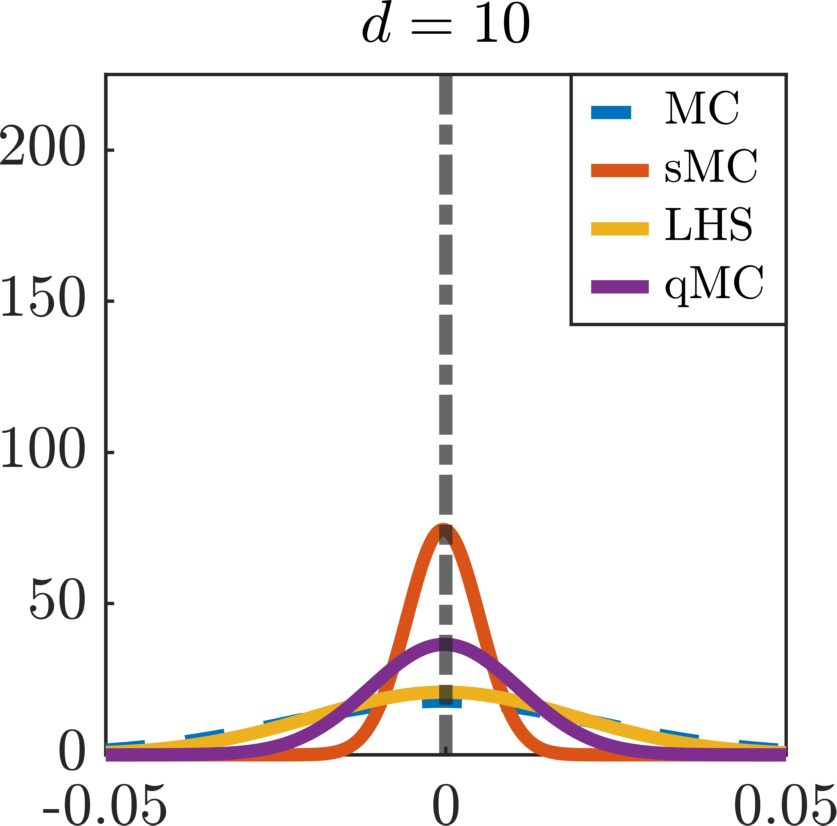} &
\includegraphics{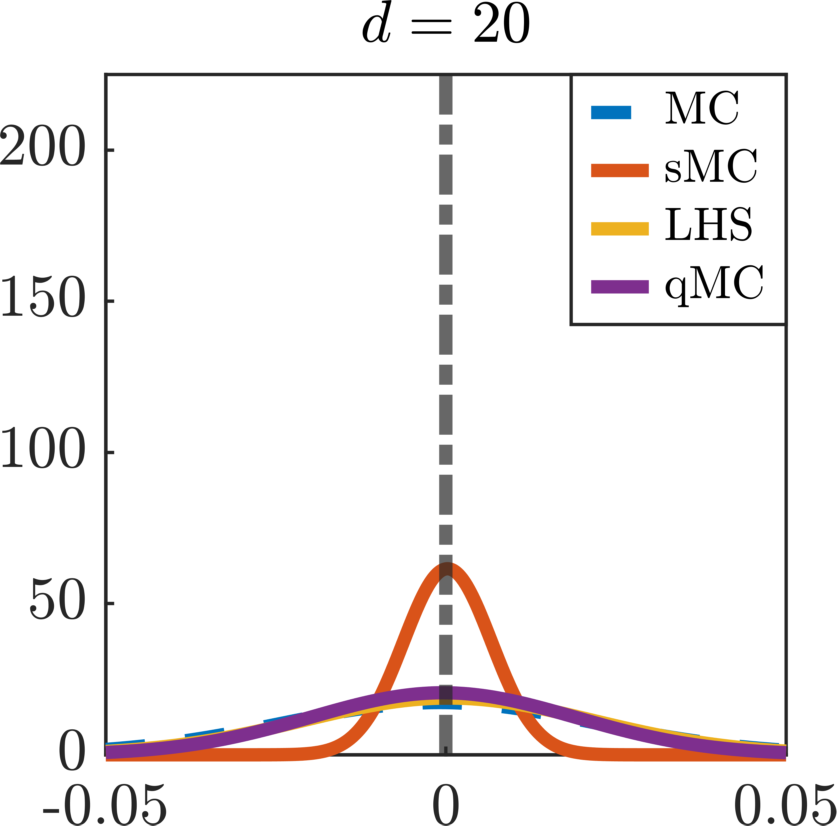} 
\end{tabular}
\end{center}
\vspace{0.25cm}
\begin{center}
\begin{tabular}{c|ccc}
\toprule
& $d = 5$ & $d = 10$ & $d = 20$ \\
\midrule
MC & 4.68e-04 (1) & 5.05e-04 (1) & 5.33e-04 (1) \\
sMC & 1.74e-05 (3.72e-02) & 2.88e-05 (5.70e-02) & 4.20e-05 (7.89e-02) \\
LHS & 1.43e-04 (3.05e-01) & 3.70e-04 (7.32e-01) & 4.51e-04 (8.47e-01) \\
qMC & 2.88e-06 (6.15e-03) & 1.19e-04 (2.36e-01) & 3.77e-04 (7.09e-01) \\
\bottomrule
\end{tabular}
\end{center}
\vspace{0.25cm}
\begin{center}
\begin{tabular}{ccc}
\includegraphics{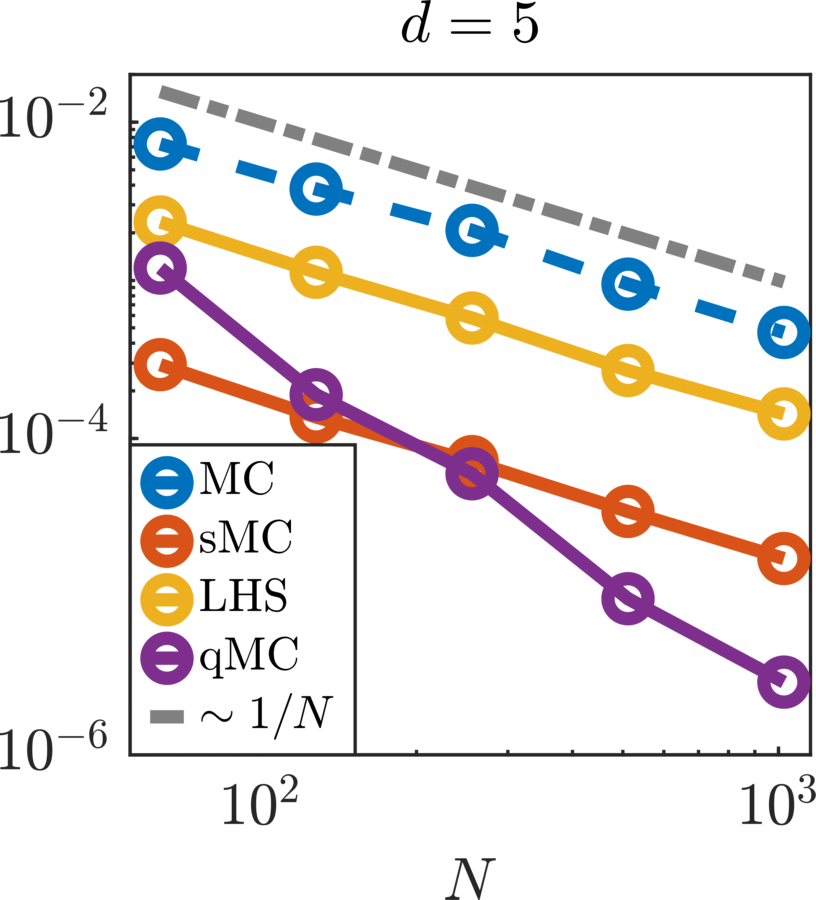} &
\includegraphics{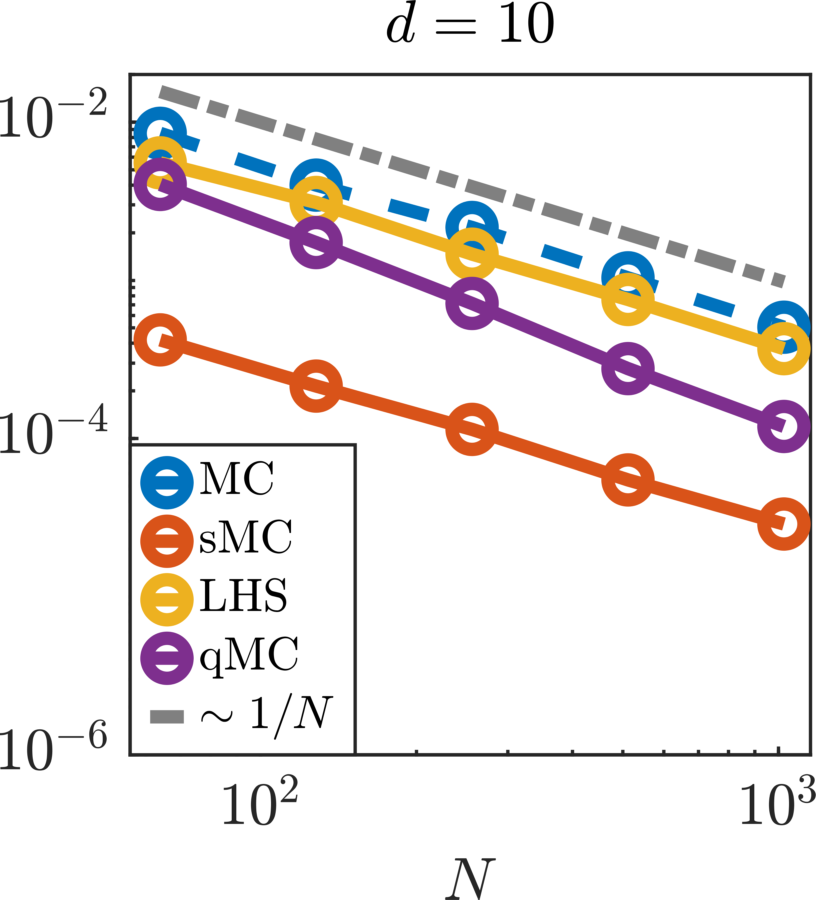} &
\includegraphics{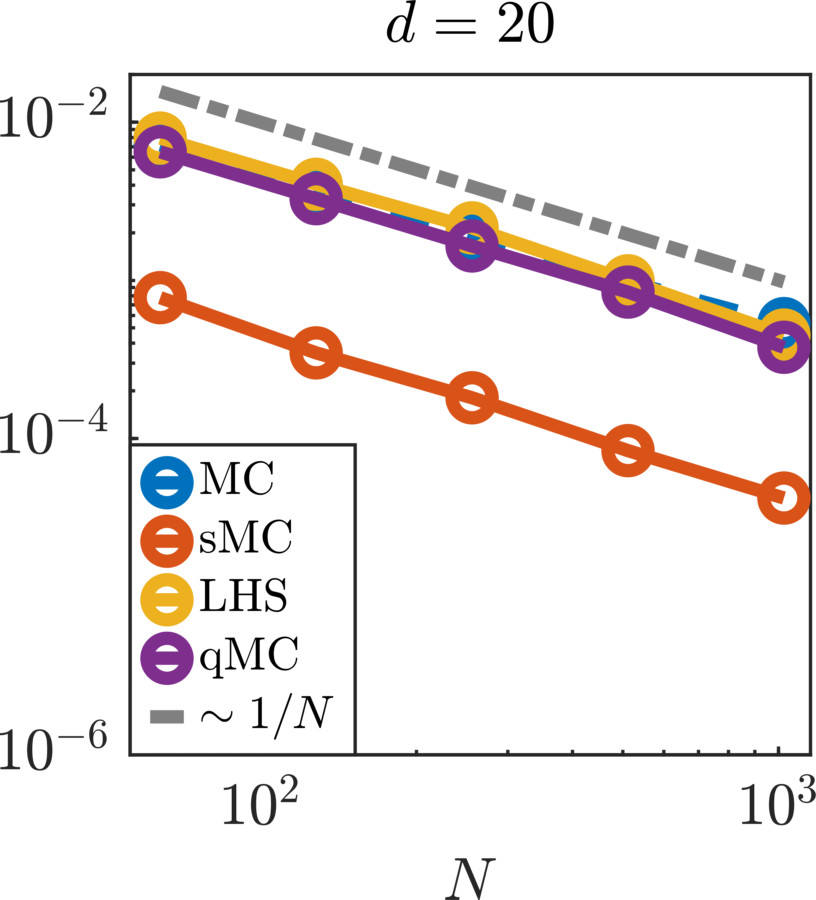}
\end{tabular}
\end{center}
\caption{\emph{TOP}: Comparison of standard Monte Carlo $\widehat q_\MC$ (dashed line) and the proposed NeurAM-based stratified estimator $\widehat q_\sMC$ with Latin Hypercube Sampling $\widehat q_\mathrm{LHS}$ and Quasi-Monte Carlo using Sobol sequences $\widehat q_\mathrm{qMC}$ (solid line), for different dimensions $d = 5, 10, 20$. The gray dash-dotted vertical line represents the exact value for the expected value of the model's output. \emph{MIDDLE}: Mean squared error (MSE) of the estimators obtained with $N=1024$. The numbers in parentheses denote the ratio between the MSE of each estimator and the Monte Carlo reference value. \emph{BOTTOM}: MSE as a function of the computational budget $N$.}
\label{fig:comparison}
\end{figure}

Different sampling-based variance reduction techniques have been proposed in the literature beyond stratified sampling. In this section, we compare our proposed methodology for stratification with two well-established strategies: Latin Hypercube Sampling (LHS) and randomized Quasi-Monte Carlo (qMC). We note that deterministic qMC is not considered here, as our focus is on variance reduction for unbiased estimators and, by construction, deterministic qMC does not possess variance. It is well-known that both LHS and randomized qMC significantly outperform standard MC and achieve higher convergence rates for smooth models in low dimensions. Also, depending on the specific application, LHS and randomized qMC can have different advantages. For instance, with LHS (and stratification) it is possible to generate sets of samples with arbitrary number of points, whereas for qMC (either deterministic or randomized) the convergence properties are guaranteed only if the number of samples corresponds to a power of two, which can be overly restrictive for many applications. Moreover, for both LHS and qMC, the dimensionality of the problem has an effect on their performance. To illustrate the effect of dimensionality, we introduce an analytical model that can be easily scaled in term of its dimension. Consider the model $\Q^{(d)} \colon \R^d \to \R$ defined as
\begin{equation}
\Q^{(d)}(x) = \sin \left( \sum_{i=1}^d x_i \right),
\end{equation}
where $d$ denotes the input dimensionality, and assume a uniform input probability distribution $\mu = \mathcal U([-1,1]^d)$, which implies $q_d = \Ex^\mu[\Q_d(X)] = 0$. We then set $M = 1000$ and $K = 10^6$ to learn the NeurAM and the CDF, respectively. In \cref{fig:comparison}, for different values of the dimension $d = 5, 10, 20$, we compare independent repetitions of our stratified estimator with $S = 16$ strata to the same number of independent repetitions for standard Monte Carlo, LHS, and randomized qMC, assuming a computational budget $N = 1024$. Both LHS and qMC are implemented using the \texttt{scipy.stats.qmc} Python package. We also note that the randomized version of qMC Sobol' sequences in \texttt{scipy} is based on digital shift and includes linear matrix scrambling. Similar results can be obtained using randomized digital nets from the \texttt{QMCPy} Python package, whereas randomized Halton sequences produce results with higher variance. For all estimators, in \cref{fig:comparison} we also report the mean squared error (MSE) for $N = 1024$ and plot it as a function of an increasing computational budget $N = 64, 128, 256, 512, 1024$ (we limit the computational budget to powers of two to avoid penalizing qMC). While LHS performs overall only slightly better than standard Monte Carlo, randomized qMC provides a substantial improvement over both Monte Carlo and our stratified estimator in terms of MSE and convergence rate for $d = 5$. Similar results can also be observed for the functions used in the previous sections due to their dimensionality. However, qMC variance reduction compared to MC degrades as the dimension increases to $d = 10$ and offers no improvement for $d = 20$. In contrast, the NeurAM-based stratified Monte Carlo estimator is not significantly affected by increasing dimensionality, resulting in a more robust estimator that consistently reduces the variance of standard Monte Carlo across all three considered dimensions. Finally, we also remark that the evaluation of the estimator variance, which is needed for the construction of a confidence interval, is straightforward for Monte Carlo and stratified estimators (see, e.g., equation \cref{eq:sMC_general_NeurAM} for our stratified estimator), but it requires repetitions for qMC, which reduces the effective computational budget available for estimating the statistics of interest.

\subsection{Darcy flow} \label{sec:Darcy}

\begin{figure}
\begin{center}
\includegraphics{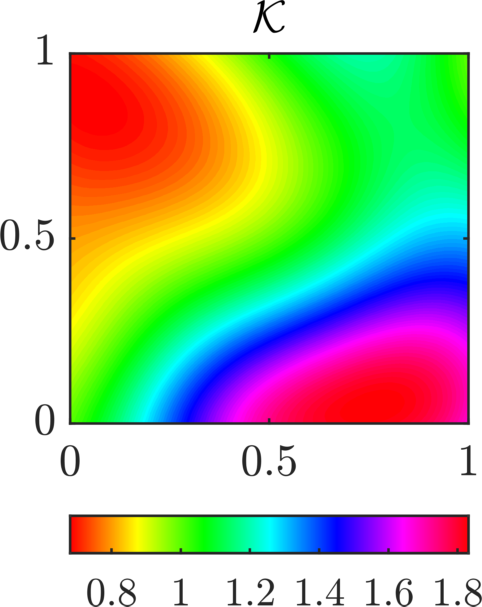}
\includegraphics{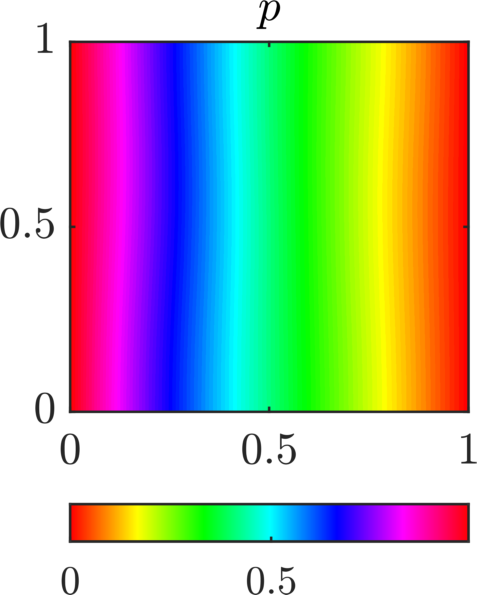}
\includegraphics{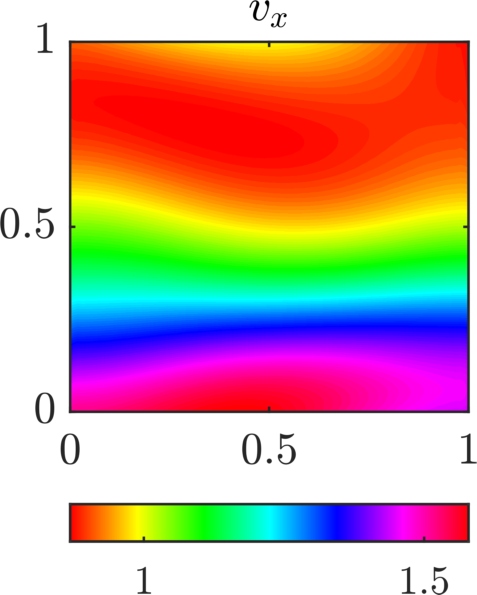}
\includegraphics{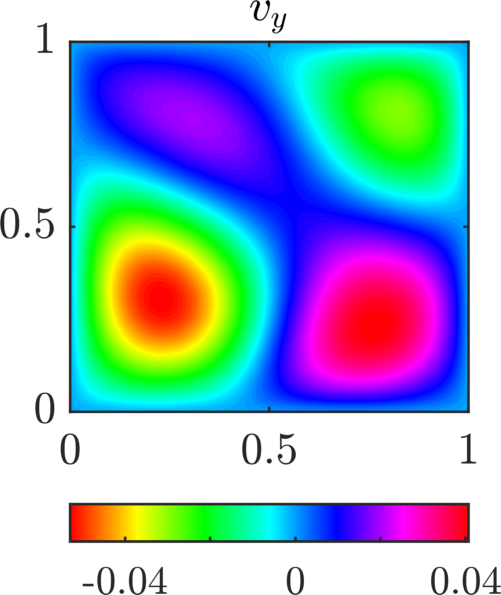}
\end{center}
\vspace{0.25cm}
\begin{center}
\begin{tabular}{cc}
\includegraphics{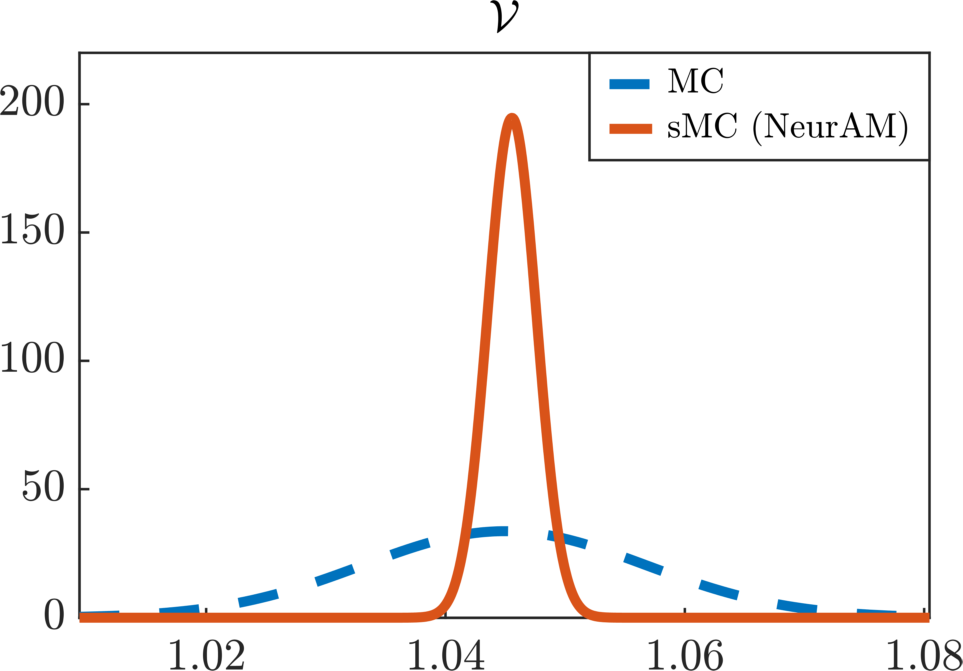} & \includegraphics{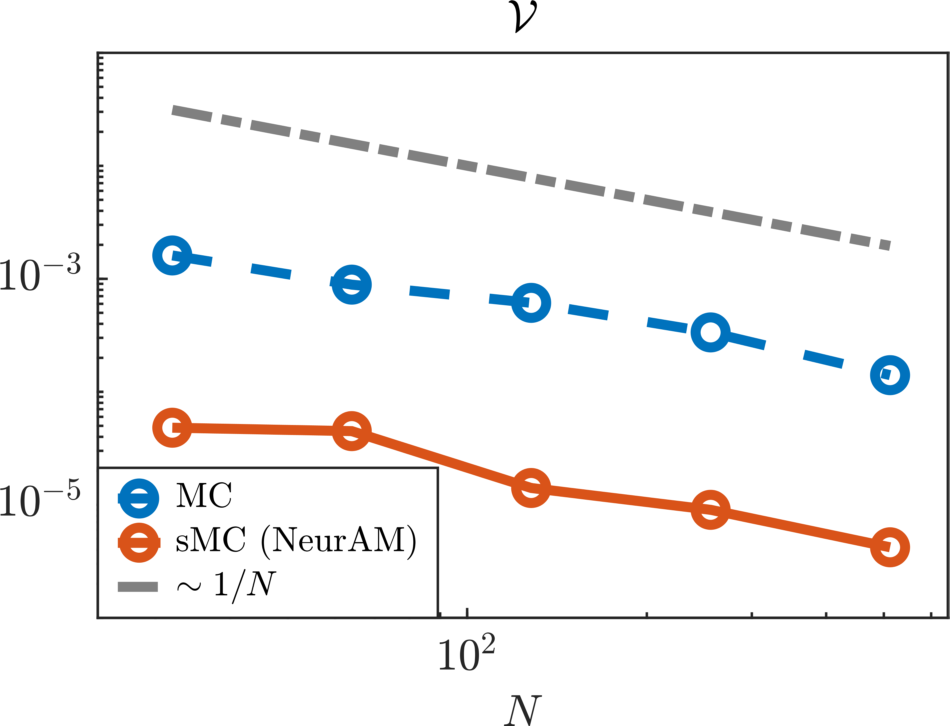}
\end{tabular}
\end{center}
\caption{\emph{TOP:} A single realization of the permeability $\mathcal K$, $p$, $v_x$, $v_y$ for the Darcy flow problem. \emph{BOTTOM-LEFT:} Comparison between standard Monte Carlo estimator $\widehat q_\MC$ (dashed line) and NeurAM-based stratified estimator $\widehat q_\sMC$. \emph{BOTTOM-RIGHT:} Variance as a function of the computational budget $N$.}
\label{fig:Darcy}
\end{figure}

In the last numerical experiment, we consider the two-dimensional Darcy flow problem, which models a single phase, steady-state flow through a random permeability field. Let $\Omega = [0,1]^2$ be the spatial domain representing an idealized oil reservoir, and let $p \colon \Omega \to \R$ and $v \colon \Omega \to \R^2$ be the pressure and velocity field of the fluid, respectively, which satisfy the equations
\begin{equation}
\begin{aligned}
- \nabla \cdot (\mathcal K \nabla p) &= f, &&\text{in } \Omega, \\
v &= - \mathcal K \nabla p, &&\text{in } \Omega,
\end{aligned}
\end{equation}
where $\mathcal K \colon \Omega \to \R_+$ is the permeability field and $f \colon \Omega \to \R$ is a source term. Moreover, let $\Gamma_D^l = \{ 0 \} \times [0,1]$, $\Gamma_D^r = \{ 1 \} \times [0,1]$, $\Gamma_N = \partial \Omega \setminus (\Gamma_D^l \cup \Gamma_D^r)$, and impose the following Dirichlet and Neumann boundary conditions
\begin{equation}
\begin{aligned}
p &= 1, &&\text{on } \Gamma_D^l, \\
p &= 0, &&\text{on } \Gamma_D^r, \\
\nabla p \cdot \mathrm n &= 0, &&\text{on } \Gamma_N.
\end{aligned}
\end{equation}
The random permeability $\mathcal K$ is modeled as $\mathcal K = e^G$, where $G$ is a Gaussian random field $G \sim \mathcal N(0, \Xi)$ with Gaussian covariance function
\begin{equation}
\Xi((x_1,y_1), (x_2,y_2)) = \tau^2 \exp \left[ - \frac{(x_1 - x_2)^2 + (y_1 - y_2)^2}{\ell^2} \right].
\end{equation}
In order to obtain finite-dimensional samples of $G$, we employ the truncated Karhunen–-Loève expansion with $\kappa$ terms
\begin{equation}
G(x,y) = \sum_{j=1}^\kappa \xi_j \sqrt{\theta_j} \psi_j(x,y),
\end{equation}
where $(\psi_j)_{j=1}^\infty$ are eigenfunctions of the covariance operator $\Xi$ and form an orthonormal basis of $L^2(\Omega)$, $(\theta_j)_{j=1}^\infty$ are the corresponding eigenvalues, and $(\xi_j)_{j=1}^\kappa$ are independent and identically distributed standard Gaussian random variables, i.e., $\xi_j \sim \mathcal N(0,1)$. Our goal is then estimating the expectation of the quantity of interest $\mathcal V$ given by
\begin{equation}
\mathcal V = \int_0^1 \int_0^1 (v_x(x,y)^2 + v_y(x,y)^2) \dd x \dd y,
\end{equation}
where $v_x$ and $v_y$ denote the two components of the velocity field $v$. 

The Darcy flow simulations are performed using the FEniCS computing platform \cite{LMW12} with a spatial discretization of $64$ elements per dimension. A representative realization of the permeability field $\mathcal K$, the pressure $p$, and the velocity components $v_x$ and $v_z$ is shown in \cref{fig:Darcy}. In this experiment, we set the source term to zero, $f = 0$, choose a truncation level $\kappa = 100$, and fix the standard deviation and correlation length of the covariance function to $\tau = 0.2$ and $\ell = 0.5$, respectively. The lower part of the same figure compares the performance of the NeurAM-based stratified estimator with that of a standard Monte Carlo estimator. We employ $M = 2000$ samples to train the NeurAM, $K = 10^6$ for the CDF, and assume a total computational budget of $N = 512$. Unlike the previous test cases, due to the increased cost of each simulation, we employ 20 independent evaluations of each estimator for variance estimation. The resulting variances are $\Var[\widehat q_\MC] = \text{1.40e-4}$ and $\Var[\widehat q_\sMC] = \text{4.20e-6}$, yielding a variance ratio of 3.00e-2. In addition, we report the estimator variance as a function of the computational budget $N = 32, 64, 128, 256, 512$. These results, obtained for random inputs in $d = \kappa = 100$ dimensions, show that the proposed stratification approach significantly reduces Monte Carlo variance even in high-dimensional settings.

\section{Conclusion}\label{sec:conclusion}

Stratified sampling is a well-known variance reduction technique in Monte Carlo estimation.
Despite its good performance in simple test cases, it does not scale well to high dimensions, making it inefficient for real applications.
To overcome this problem, we combine stratification on the unit interval with NeurAM, a recently proposed data-driven strategy for nonlinear dimensionality reduction. 
NeurAM generates strata that are adapted to the variation of the underlying model and generally bounded by its level sets.
Our approach is easy to implement, and can be effectively combined with other techniques for variance reduction, for example multifidelity estimators. 
We analyze this latter combination in detail, showing that our approach leads to consistent variance reduction.
We study the conditions leading to optimal stratification and optimal sample allocation for both stratified and multifidelity stratified estimators, and provide a heuristic algorithm to sequentially refine a collection of strata, that iteratively reduces the variance of the resulting estimator. 
Moreover, we demonstrate the performance of the proposed stratified estimators through several numerical experiments obtaining effective variance reduction for both low- and high-dimensional problems. 

The idea presented in this work could be extended in multiple directions. First, we applied NeurAM-based stratification to multifidelity Monte Carlo estimators, but other variance reduction strategies could also be considered.
In addition, similarly to what we do in \cref{sec:num_AS} for active subspaces, alternative techniques for dimensionality reduction could be leveraged to improve stratified sampling. 
One limitation of the current approach, inherited from the NeurAM methodology \cite{ZGS25}, concerns its performance for models exhibiting discontinuities and bifurcations in which the low-dimensional manifold is not simply connected, and addressing this limitation is an interesting direction for future work.
Moreover, even if, ideally, a one-dimensional manifold should be sufficient to capture the response surface of a scalar quantity of interest, for more complex problems, a NeurAM with more than one dimension could provide improved results. However, this direction involves several nontrivial challenges. 
First, by moving to higher dimensions, we lose the interpretation that the strata tend to follow the contour lines of the model, and it becomes unclear how to perform stratification in the latent space so that it remains effective in the original domain.
Second, inverse transform sampling is no longer straightforward in dimensions greater than one. Hence, one would need a method to map the latent space distribution to a uniform distribution on the unit hypercube, e.g., via normalizing flows, flow matching, or diffusion models, which should be trained in addition to NeurAM. 
Third, the extension of NeurAM itself to a multi-dimensional latent space requires investigation. In particular, as discussed in \cite[Section 5]{ZGS25}, we would like to preserve some ordering or structure in the latent coordinates, similar to the active subspace method, where reduced variables are ranked based on their importance.
Finally, this study can be extended to higher-order statistical moments of a given quantity of interest, or for variance reduction in sensitivity analysis.

\subsection*{Acknowledgements}
We thank the Editor and the anonymous referees for their thorough reading and for their insightful comments and valuable suggestions, which have helped improve and clarify this manuscript. This article has been authored by an employee of National Technology \& Engineering Solutions of Sandia, LLC under Contract No. DE-NA0003525 with the U.S. Department of Energy (DOE). The employee owns all right, title and interest in and to the article and is solely responsible for its contents. The United States Government retains and the publisher, by accepting the article for publication, acknowledges that the United States Government retains a non-exclusive, paid-up, irrevocable, world-wide license to publish or reproduce the published form of this article or allow others to do so, for United States Government purposes. The DOE will provide public access to these results of federally sponsored research in accordance with the DOE Public Access Plan https://www.energy.gov/downloads/doe-public-access-plan.
DES acknowledges support from NSF CDS\&E award \#2104831, and NSF CAREER award \#1942662. AZ is supported by ``Centro di Ricerca Matematica Ennio De Giorgi'' and the ``Emma e Giovanni Sansone'' Foundation, and is member of INdAM-GNCS.
The authors would like to thank the Center for Research Computing at the University of Notre Dame for providing computational resources and support that were essential to generate the results for the Darcy flow problem.

\bibliographystyle{siamnodash}
\bibliography{biblio}

\end{document}